\newcommand{\p}{\partial}
\newcommand{\ti}{\tilde}
\newcommand{\ra}{\rightarrow}
\newcommand{\R}{\mathbb{R}}
\renewcommand{\phi}{\varphi}
\newcommand{\lan}{\left\langle}
\newcommand{\ran}{\right\rangle}
\newtheorem{theorem}{Theorem}
\newtheorem{proposition}{Proposition}
\newtheorem{lemma}{Lemma}
\newtheorem{definition}{Definition}
\newtheorem{thm}{Theorem}[section]
\newtheorem{prop}[thm]{Proposition}
\newtheorem*{prop*}{Proposition}
\numberwithin{equation}{section}
\theoremstyle{definition}
\title[Scattering rigidity for analytic manifolds with a magnetic field]{Scattering rigidity for analytic Riemannian manifolds with a possible magnetic field}
\author[P. Herreros]{Pilar Herreros}
\author[J. Vargo]{James Vargo}
\address{Mathematisches Institut, University of M\"unster, 48149 M\"unster, Germany}
\address{Department of Mathematics, University of Washington, Seattle, WA 98195}
\begin{document}
\begin{abstract}
Consider a compact manifold $M$ with boundary $\p M$ endowed with a Riemannian metric $g$ and a magnetic
field $\Omega$.  Given a point and direction of entry at the boundary, the scattering relation $\Sigma$ determines
the point and direction of exit of a particle of unit charge, mass, and energy.  In this paper we show
that a magnetic system $(M, \p M, g, \Omega)$ that is known to be real-analytic and that
satisfies some mild restrictions on conjugate points is uniquely determined up to a natural
equivalence by $\Sigma$.  In the case that the magnetic field $\Omega$ is taken to be zero, this gives a new rigidity result
in Riemannian geometry that is more general than related results in the literature.
\end{abstract}

\maketitle

\section{An introduction including the results proved}

\subsection{Magnetic systems and scattering data}
Let $M$ be a compact, $n$-dimensional, Riemannian manifold with
boundary $\p M$ and metric $g$.  Let $\pi:T^*M\rightarrow M$ be the
natural projection of the cotangent bundle to $M$ given by
$(x,\xi)\mapsto x$.  The Hamiltonian flow of the function
$H(x,\xi)=\frac{1}{2}|\xi|_g^2$ with respect to the standard
symplectic structure of $T^*M$ is the geodesic flow.  Given a closed
$2$-form $\Omega$ on $M$, there is a corresponding {\em magnetic
geodesic flow} which is determined by the same Hamiltonian function,
but through the modified symplectic $2$-form
\begin{align*}
\omega= dx\wedge d\xi+\pi^*\Omega.
\end{align*}
The integral curves of this flow, projected to $M$ by $\pi$, are the
{\em magnetic geodesics}. The level sets of $H$ are preserved along
the flow and that implies that the magnetic geodesics have constant speed.
In this paper, we shall restrict our attention to the energy level
$H=1/2$, the unit co-sphere bundle $S^*M$. The isomorphism
$T^*M\rightarrow TM$ induced by the metric $g$ transforms the
magnetic flow to a flow on $SM$, the sphere bundle in $TM$.  Let
$\mathcal{X}$ and $\mathcal{X}_\mu$ denote the vector fields on $SM$
generating the standard geodesic flow and magnetic geodesic flow,
respectively. They are related by the equation
\[\mathcal{X}_\mu(x,v)=\mathcal{X}(x,v)+Y(x)v,\]
where $Y$ is an antisymmetric $(1,1)$ tensor on $M$ satisfying
\begin{align*}
\langle Y(x)v,w\rangle_g=\Omega(v,w).
\end{align*}
Let $\Phi^t:SM\rightarrow SM$ denote the magnetic geodesic flow.
Given a pair $(x,\theta)\in SM$, let $\gamma_{x,\theta}$ or
$\gamma_{x,\theta}(t)$ denote the magnetic geodesic with initial
point and vector $(x,\theta)$.  We also use the magnetic exponential map
\[ \exp^\mu:TM\ra M,\]
defined by $\exp^\mu_x(t\theta)=\gamma_{x,\theta}(t)$ for unit length $\theta$.
This map is $C^1$ everywhere and smooth away from $\theta = 0$.

\begin{definition}
A $4$-tuple $(M, \p M, g, \Omega)$ is called a {\bf magnetic system}.  Two such
systems $(M_i, \p M_i, g_i,\Omega_i),\, i=1,2$ with the same boundary $\p
M_1=\p M_2$ are said to be {\bf equivalent} if there exists a
diffeomorphism $\phi:M_1\rightarrow M_2$ such that:
\begin{align*}
\phi^*g_2=g_1, \qquad \phi^*\Omega_2=\Omega_1,\qquad \phi|_{\p
M}=id.
\end{align*}
The diffeomorphism $\phi$ is called a {\bf magnetic equivalence}.
\end{definition}

In this paper, the term analytic shall mean real-analytic. We define
a magnetic system $(M, \p M, g,\Omega)$ to be {\em
analytic} if there exists a magnetic system $(M', \p M', g',\Omega')$ with
$M'$ a real-analytic open manifold containing $M$ such that:
\begin{itemize}
\item[(i)]  $g'|_M=g$ and $\Omega'|_M=\Omega$,
\item[(ii)]  $g'$ and $\Omega'$ are real-analytic on $M'$,
\item[(iii)]  $\p M$ is a real-analytic submanifold of $M'$.
\end{itemize}
The extensions $(g',\Omega')$ are uniquely determined from
$(g,\Omega)$ by analytic continuation and shall henceforth be
denoted without the prime.  That is, $(g,\Omega)$ shall denote its
own extension to $M'$.

The {\em scattering magnetic rigidity question} is whether the equivalence
class of a magnetic system is determined by the scattering of its
magnetic geodesics.  Given a point of entry $x\in \p M$ and an
initial direction $\theta\in S_x M$, the scattering data tells all
future points of contact $\gamma_{x,\theta}\cap \p M$ as well as the
directions of motion at those points.  More precisely:

\begin{definition}
A pair of vectors $(v,w)\in \p SM\times \p SM$ belongs to the {\bf
scattering relation} $\Sigma$ if for some $t\geq 0$, $w=\Phi^t(v)$.
\end{definition}

After properly defining all the terms and introducing some minor
restrictions on conjugate points, it is shown that two analytic magnetic systems are
equivalent if they have the same scattering data.

This problem descends from the {\em boundary rigidity problem}
in which one seeks to recover a metric $g$ up to isometry from the
boundary distance function $\rho_g:\p M\times \p M\rightarrow
\mathbb{R}$, defined by:
\begin{align*}
\rho_g(x,y)=\inf \{ \text{length}(\gamma):\, \gamma\subset M\,
\text{is a curve joining}\, x\, \text{to}\, y\}.
\end{align*}
A particular Riemannian manifold with boundary $(M, \p M, g)$ is called
{\em boundary rigid} if all other such manifolds with the same
boundary distance function are in the same isometry class.  The
known examples of manifolds that are not boundary rigid either have
trapped geodesics (geodesics that never leave the manifold) or
conjugate points.  The most general class of manifolds conjectured
to be boundary rigid are those with the SGM property.  This property is detectable from the boundary and is equivalent to the two conditions that there are no trapped geodesics and that all geodesic segments are strong length minimizers \cite{C1}.
SGM manifolds homeomorphic to a ball with a strictly convex boundary are called
{\em simple}.  It has been conjectured that all simple manifolds are boundary rigid (e.g. \cite{Mi}), but so far, such a general statement has only been confirmed in dimension $2$ in work by Pestov and Uhlmann \cite{PU}.  In higher dimensions, Gromov showed that subregions of Euclidean space are boundary rigid \cite{Gr}.  Other spaces known to be boundary rigid are subregions of boundary rigid spaces, subregions of a hemisphere \cite{Mi}, simple symmetric spaces of negative curvature \cite{BCG}, and SGM subregions of spaces with a parallel vector field \cite{CK}.  Recently, it was shown by Burago and Ivanov that metrics sufficiently close in the $C^\infty$ topology to the Euclidean metric on a convex domain are boundary rigid \cite{BI}.  The last result implies, for example, that for any point on any Riemannian manifold, a sufficiently small neighborhood of that point is boundary rigid.

In applications, this problem is called travel time tomography and
goes back at least as far as the early 20th century, when it was proposed
to measure seismic waves to reconstruct an image of the inner structure of the earth. Besides being used
widely by geophysicists, travel time tomography is used by oceanographers and atmospheric scientists, and has potential applications in medical imaging. In
some applications, it is natural to assume that the metric is
isotropic; that is, conformal to the Euclidean.  For metrics in a
given conformal class, uniqueness was first shown for simple metrics by
Mukhometov \cite{Mu} and later by Croke for SGM metrics \cite{C1}.

The SGM and simplicity conditions are restrictive geometric
conditions which are not typically satisfied in applications.  In
more general circumstances, one can instead consider the lens data,
which consists of the scattering data together with the travel times
of the geodesics.  It was shown by Michel that the lens data is
equivalent to the boundary distance function for simple manifolds \cite{Mi}. If
trapped geodesics are allowed, then counterexamples to lens rigidity
exist \cite{CK}.  Croke showed that if a manifold is lens rigid,
then so is a finite quotient of that manifold \cite{C2}.  In \cite{V} lens
rigidity was shown if the metrics are a priori assumed to be real
analytic.  Stefanov and Uhlmann proved lens rigidity for metrics
that are a priori known to be close to a given base metric taken
from a set of generic metrics including the real analytic ones \cite{SU2}.

The magnetic rigidity problem is related to the question of whether there are more general families of curves whose defining parameters can be recovered from boundary measurements.
Magnetic geodesics and the magnetic flow were first considered by V.I. Arnold
\cite{A61} and D.V. Anosov and Y.G. Sinai \cite{AS}. Since then it has been studied from several approaches (e.g. \cite{CMP, Gi96, Gr99,NT, PP97}) and their boundary rigidity, in particular, has been studied in \cite{DPSU, He}.

One motivation for magnetic boundary rigidity is that it has potential applications to problems of geometry. In the case of surfaces, magnetic geodesics are related to the study of curves of constant geodesic curvature. This relation has been used to study the existence of closed curves with prescribed geodesic curvature (see e.g \cite{A88, Le, Sch}). Magnetic boundary rigidity was used in \cite{He} to show that a surface of constant curvature cannot be modified in a small region while keeping all the curves of a fixed constant geodesic curvature closed.

Let $\iota:\p M\hookrightarrow M$ be inclusion. Since a magnetic
equivalence fixes the points of $\p M$, the pullbacks $\iota^*g$ and
$\iota^*\Omega$ are preserved.  However the scattering relation
$\Sigma$ is not preserved because its definition depends on the set
$\p SM$ which is defined through $g|_{\p M}$. For this reason we
reparametrize $\Sigma$ through the natural diffeomorphism
\begin{align*}
\Lambda:\p SM\rightarrow \ti{\p SM}=\{(w,r)\in T\p M\times
\mathbb{R}:\, |w|_g^2+r^2=1\}.
\end{align*}
$\Lambda$ is given by $v\mapsto (v^T,\langle v,\nu\rangle_g)$, where
$v^T$ denotes the orthogonal projection of $v\in \p SM$ to $T\p M$,
and $\nu$ denotes the inward-pointing unit normal vector. The
reparametrized scattering relation, $\ti{\Sigma}\subset \ti{\p
SM}\times\ti{\p SM}$, is defined to be $\ti{\Sigma}=(\Lambda\times
\Lambda) (\Sigma).$  Given an equivalence class of magnetic systems
$\{(M, \p M, g,\Omega)\}$, we shall call $(\p
M,\,\iota^*g,\,\iota^*\Omega,\,\ti{\Sigma})$ the associated {\em
scattering data}.

An equivalent way of parametrizing the scattering data is given
through {\em boundary normal coordinates}.  Given $x_0\in \p M$,
there exists a neighborhood $N\subset \p M$ of $x_0$ and a number
$\epsilon>0$ such that the map
\begin{align*}
\exp_{\nu}:N\times (-\epsilon,\epsilon)\rightarrow M'
\end{align*}
given by $\exp_{\nu}(x',x^n)=\exp_{x'}(x^n\nu)$ is an injective
immersion. Here $\exp$ denotes the exponential map corresponding to
the Riemannian metric $g$, not the magnetic exponential $\exp^\mu$.  Moreover, if $x^\alpha:1\leq \alpha\leq
n-1$ are local coordinates for $\p M$, then the metric tensor has
the form
\begin{align*}
g=g_{\alpha\beta}(x)dx^{\alpha}dx^{\beta}+(dx^n)^2.
\end{align*}
$(x',x^n)$, as local coordinates for $M$ near $x_0$ are called {\em
boundary normal coordinates}, or {\em semigeodesic coordinates}.
Note that the curves given by $x'=const$ are geodesics (not magnetic
geodesics) of $(M,g)$ normal to $\p M$.

Since $\p M$ is compact, it is possible to extend the boundary
normal coordinates to a collared neighborhood of the entire
boundary. Indeed, if $\epsilon$ is sufficiently small, then we
obtain a diffeomorphism
\begin{align}\label{BNC}
\exp_{\nu}:\p M\times (-\epsilon,\epsilon)\rightarrow V,
\end{align}
where $V\subset M'$ consists of the points whose distance to $\p M$
is less than $\epsilon$.  For a detailed proof, see \cite{V}.  It
shall be convenient to set
\begin{align*}
M'=M\cup V.
\end{align*}
Finally, when discussing two magnetic systems with the same
boundary, we shall assume that the value of $\epsilon$ is chosen
sufficiently small so that \eqref{BNC} is a valid diffeomorphism for
both.

Note that the mapping $\Lambda$ is equal to the pullback of vectors
at the boundary by $\exp_{\nu}$.
\begin{align*}
\Lambda=\exp_{\nu}^*|_{\p SM}.
\end{align*}

\begin{definition}
$(M,\p M, g,\Omega)$ is called {\bf non-trapping} if for all
$(x,\theta)\in SM$, there exist $T_-<0$ and $T_+>0$ such that
$\gamma_{x,\theta}(T_\pm)\in M'\setminus M$; that is, every magnetic
geodesic must eventually leave $M$ in both directions.
\end{definition}

In this paper, we shall assume that all magnetic systems are
non-trapping.  Let $v=(x,\theta)\in \p SM$, and let $\ell(v)=T$ be
defined by
\begin{align*}
T=\inf\, \{t\geq 0: \gamma_v(t)\notin M\}.
\end{align*}
Then $\gamma_v(T)$ and $\gamma_v'(T)$ are respectively called the
terminal point and direction of $\gamma_v$ in $M$.  The mapping
\begin{align*}
\ell:\p SM\rightarrow \mathbb{R}
\end{align*}
is called the {\em travel time map of M}.  The corresponding map
$\ti{\ell}:\ti{\p SM}\rightarrow \mathbb{R}$ given by
$\ti{\ell}=\ell\circ \Lambda^{-1}$ is called the {\em travel time
data} of the equivalence class of $(M, \p M, g,\Omega)$.
\begin{definition}
We say that $v$ satisfies {\bf condition A} if there are no points
in $\p M\cap \{\gamma_{v}(t):0\leq t\leq T\}$ that are conjugate to
$x$ along $\gamma_{v}$.  In this paper, conjugate shall always mean with respect to the magnetic flow.  
We shall say that an analytic magnetic
system $(M, \p M, g,\Omega)$ satisfies {\bf condition $\hat{A}$ } if there
exists at least one vector in each connected component of $\p SM$
that satisfies condition A.
\end{definition}

Let $\mathcal{G}$ be the collection of magnetic geodesic segments in
$M$ whose endpoints lie in $\partial M$ and do not intersect $\partial M$ at any other point. Let $\gamma\in \mathcal{G}$ be
an element with initial vector $v\in SM$, travel time $T$, and
final point $y=\gamma(T)$.
\begin{definition}
We say that $\gamma$ satisfies {\bf condition B} if there are no
points on $\gamma$ that are conjugate of order $n-1$ to $y$.  We
shall say that an analytic magnetic system $(M, \p M, g,\Omega)$ satisfies
{\bf condition $\hat{B}$} if the set of magnetic geodesic segments in
$\mathcal{G}$ satisfying condition B is dense in $\mathcal{G}$.
\end{definition}
Recall that if $x$ and $y$ belong to $M$ and $y=\exp^\mu_x(v)$, then $x$
and $y$ are conjugate if $d_v \exp^\mu_x(v)$ is singular.  The order, or multiplicity, of
conjugacy is equal to the dimension of the kernel with $n-1$ being
the largest possible. For example, two antipodal points on $S^n$ (with $\Omega=0$) are
conjugate of order $n-1$.

\begin{lemma}\label{openA}
Condition A is open in $\p SM$.  Condition B is open in
$\mathcal{G}$.
\end{lemma}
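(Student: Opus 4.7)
The plan is to prove the two openness statements separately, reducing each to the observation that the relevant nonconjugacy assumption is an open condition on a finite (or uniformly controlled) set of points, which persists under small perturbations of the initial data.

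For condition A, fix $v_0 = (x_0, \theta_0) \in \partial SM$ satisfying condition A, set $T_0 = \ell(v_0)$, and write $\gamma_0 = \gamma_{v_0}$. Since $\gamma_0$ is real-analytic and does not lie in $\partial M$ (it exits into $M' \setminus M$ at $T_0$), the set
\[
S_0 = \{t \in [0, T_0] : \gamma_0(t) \in \partial M\} = \{0 = t_0 < t_1 < \cdots < t_k = T_0\}
\]
is finite. The hypothesis is that $d_{t_i \theta_0} \exp^\mu_{x_0}$ is nonsingular for each $i$, and nonsingularity is an open condition on $(v, t)$. I would then produce disjoint open intervals $I_i \ni t_i$ and a neighborhood $\mathcal{U}$ of $v_0$ in $\partial SM$ such that, for every $v \in \mathcal{U}$, (i) every boundary hit of $\gamma_v$ in $[0, \ell(v)]$ lies in some $I_i$, and (ii) $d_{t \theta} \exp^\mu_x$ is nonsingular for all $t \in I_i$. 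Between consecutive $I_i$, $\gamma_0$ stays at positive distance from $\partial M$, and this persists for nearby $v$ by continuity of the magnetic flow; near each $t_i$, real-analyticity (via the Weierstrass preparation theorem applied to $\rho \circ \gamma_v$, where $\rho$ is a local analytic defining function of $\partial M$) controls the boundary hits of $\gamma_v$ in a neighborhood of $t_i$.

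For condition B, recall that a point $\gamma(t)$ is conjugate of order $n-1$ to the endpoint $y = \gamma(T_\gamma)$ exactly when every $1 \times 1$ minor of the Jacobi tensor from $y$ (equivalently, of $d \exp^\mu_y$ at the corresponding point) vanishes. This is a closed condition in $(\gamma, t)$ that depends continuously on the data. For $\gamma_0 \in \mathcal{G}$ satisfying condition B, the compact interval $[0, T_{\gamma_0}]$ contains no such conjugate point, so compactness yields a uniform positive lower bound on the maximum over these minors along $\gamma_0$. This bound survives under small perturbations of $\gamma_0$ in $\mathcal{G}$, proving openness.

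The main obstacle is the tangential case in condition A, in particular when $\gamma_0$ is tangent to $\partial M$ at an interior time $t_i$ or at $T_0$, where the travel time $\ell$ may fail to be continuous and the number of boundary hits of $\gamma_v$ in $[0, \ell(v)]$ can fluctuate with $v$. I would resolve this using the Weierstrass preparation theorem applied to $\rho \circ \gamma_v$ viewed as an analytic function of $(t, v)$: this gives a finite list of candidate boundary times near $t_i$, all of which converge to $t_i$ as $v \to v_0$. Since nonconjugacy at $t_i$ is open, it passes to all these nearby times.
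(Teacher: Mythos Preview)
Your proposal is correct, but it takes a noticeably different route from the paper's own proof, so a brief comparison is in order.

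For condition B the paper simply declares the openness ``clear,'' and your compactness argument with a uniform lower bound on the minors of the Jacobi tensor is a reasonable way to flesh that out. No real difference there.

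For condition A the approaches diverge. You first invoke analyticity to conclude that the set of boundary hit-times $S_0$ is finite, then isolate each $t_i$ in an interval $I_i$, and finally use the Weierstrass preparation theorem to control the perturbed boundary hits near each $t_i$, especially in the tangential case. The paper does none of this: it works directly with the compact set $K=\gamma_{v_0}\cap\partial M$ (without ever asserting it is finite), chooses an arbitrary neighborhood $N\supset K$, and argues by compactness that for $v$ near $v_0$ one has $\gamma_v\cap\partial M\subset N$; then, shrinking $N$, nonconjugacy of $x_0$ to $K$ propagates to nonconjugacy of $x$ to $\gamma_v\cap N$. The key ingredient the paper uses implicitly is the upper semicontinuity of $\ell$ (which holds here because, by analyticity and non-trapping, $\gamma_{v_0}$ lies strictly outside $M$ on some interval $(T_0,T_0+\delta)$, so nearby geodesics also exit by time $T_0+\delta$). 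That single observation replaces your entire Weierstrass-preparation machinery.

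What each approach buys: the paper's argument is shorter and avoids any local normal-form analysis, at the cost of leaving the tangential-exit subtlety implicit. Your argument makes the role of analyticity completely transparent and handles the tangential case head-on, at the cost of heavier tools (Weierstrass preparation) than are strictly needed. If you wanted to streamline your version, you could drop the finiteness of $S_0$ and the preparation theorem entirely, and instead just observe upper semicontinuity of $\ell$ as above; then the compactness argument goes through with $K$ treated as a compact set rather than a finite one.
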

\begin{proof}
The openness of condition B is clear.  Let $v_0\in \p SM$ satisfy
condition A. Let $K=\gamma_{v_0}\cap \p M$. By compactness, for any
open neighborhood $N$ of $K$, there exists an open $\ti{U}$ about
$v_0$ in $S M'$ such that for all $v\in \ti{U}$, $\gamma_v\cap \p
M\subset N$.

Since $x_0$, the basepoint of $v_0$, is not conjugate to any point
of $K$, we may choose $N$ and the corresponding neighborhood
$\ti{U}$ sufficiently small so that for each $v\in \ti{U}$, its
basepoint $x$ will not be conjugate to any point of $\gamma_v\cap
N$.  But since $\gamma_v\cap \p M\subset N$, we find that $x$ will
not be conjugate to any point of $\gamma_v\cap \p M$.
\end{proof}

\begin{theorem}\label{maintheorem}
Let $(M_i, \p M, g_i,\Omega_i),\, i=1,2$ be two non-trapping, analytic
magnetic systems satisfying condition $\hat B$ and assume that
$(M_1, \p M, g_1,\Omega_1)$ satisfies condition $\hat A$.  Then if the two
systems have the same scattering data, they must be equivalent.
\end{theorem}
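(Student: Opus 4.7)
The plan is to recover the two systems from the boundary outward: first obtain them in a collar of $\p M$, then propagate the equivalence through the interior by analytic continuation along magnetic geodesics. I would organize the argument in three stages.

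First, in boundary normal coordinates based on $\p M$, I would show that the scattering data $\ti\Sigma$ together with $\iota^*g$ and $\iota^*\Omega$ determine every normal derivative of $g$ and $\Omega$ at $\p M$. The natural tool is to analyze magnetic geodesics entering nearly tangentially, parametrized by $(w,r)\in \ti{\p SM}$ with $r\to 0^+$, and read off successive boundary jets from the Taylor expansion of the scattering map at $r=0$. Because both systems are analytic and $\p M$ is a real-analytic submanifold, equality of all normal derivatives on $\p M$ propagates analytically to the full collar $V$ from \eqref{BNC}; consequently the identity in boundary normal coordinates is a magnetic equivalence $\phi_0:V_1\to V_2$.

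Next, using condition $\hat A$, in each connected component of $\p SM_1$ I would pick a vector $v_0$ satisfying condition A, with magnetic geodesic $\gamma_0=\gamma_{v_0}$. By scattering agreement the corresponding geodesic in $M_2$ emanating from $\phi_0(v_0)$ has the same exit data. Condition A guarantees that at every intersection of $\gamma_0$ with $\p M$ the relevant magnetic exponential map is an immersion, so $\phi_0$ admits an analytic continuation along $\gamma_0$ into the interior. This produces a locally-defined analytic magnetic equivalence $\phi$ on an open tubular neighborhood of $\gamma_0$, anchoring the construction in each boundary component.

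To extend $\phi$ to all of $M_1$, I would invoke condition $\hat B$ and approximate an arbitrary interior point by magnetic geodesics $\gamma \in \mathcal G$ on which condition B holds. Along such a $\gamma$, the absence of order-$(n-1)$ conjugate points leaves the magnetic exponential map from an endpoint nondegenerate in at least one direction, which is what is needed to carry the analytic continuation through a neighborhood of $\gamma$. Varying $\gamma$ over a covering family and invoking the uniqueness of analytic continuation, one should obtain a globally well-defined analytic diffeomorphism $\phi:M_1\to M_2$ satisfying $\phi^*g_2=g_1$, $\phi^*\Omega_2=\Omega_1$, and $\phi|_{\p M}=\mathrm{id}$.

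The step I expect to be the main obstacle is the global single-valuedness of the analytic continuation in the third stage: different paths from $\p M$ to an interior point could a priori yield incompatible extensions, and the monodromy of the continuation must be shown to be trivial. This is precisely where density under condition $\hat B$ and the avoidance of maximal-order conjugate points are indispensable; by contrast, the boundary jet determination and the local continuation along a non-conjugate seed geodesic are comparatively standard once the analyticity framework is set up.
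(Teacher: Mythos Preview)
Your three-stage outline misallocates the two hypotheses and, more importantly, omits a step that turns out to be indispensable.

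First, condition $\hat A$ is not a device for seeding an interior continuation; it is precisely what makes the boundary jet determination go through. The scattering relation carries no travel times, so a Taylor expansion of the scattering map at $r=0$ has nothing to anchor the normal derivatives unless the boundary is magnetically convex, which is not assumed here. The paper instead builds an action function $\rho(x,y)=\mathbf{A}_\zeta[\gamma_{x,y}]$ along a possibly long magnetic geodesic and exploits the eikonal-type identity $g^{ij}(\partial_{x^i}\rho-\zeta_i)(\partial_{x^j}\rho-\zeta_j)=1$; condition A is exactly the non-conjugacy needed for $\rho$ to be smooth and for the inductive recovery of $\partial_n^k g$, $\partial_n^k\zeta$ to close. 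Your Stage~1, as written, would only work under a convexity hypothesis you do not have.

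Second, and this is the genuine gap, you never recover the travel time data $\tilde\ell$, and that is where condition $\hat B$ actually enters. Once $\phi_0$ is known on the collar, one compares the magnetic Jacobi tensors $\mathcal{J}_1,\mathcal{J}_2$ along corresponding geodesics: they agree for small $t$ by the collar equivalence, hence for all $t$ by analyticity, and if $T_1<T_2$ then $\mathcal{J}_2(T_1)=0$ forces an order-$(n-1)$ conjugate point on $\gamma_2$, contradicting condition B. This is the sole role of $\hat B$. Without equal travel times your ``analytic continuation along geodesics'' cannot even be formulated: to send $\gamma_1(t)\in M_1$ to a point of $M_2$ you must specify at which parameter to evaluate the $M_2$-geodesic, and nothing in your scheme supplies that parameter. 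The actual construction of $\phi$ is not a monodromy argument but the explicit map $\gamma_1(t)\mapsto\gamma_2(t)$, well-defined because $T_1=T_2$; its independence of the chosen geodesic is then verified by an analytic squared-distance comparison, not by avoiding conjugate points along the path. Your Stage~3 reading of condition B (``exponential map nondegenerate in at least one direction'') is both weaker than what the argument needs and aimed at the wrong target.
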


The first step of the proof is to show that the mapping
$\phi_0:V_1\rightarrow V_2$ given by
\begin{align}\label{phi0}
\phi_0=\exp_{\nu_2}\circ\exp_{\nu_1}^{-1}
\end{align}
is a magnetic equivalence fixing the points of $\p M$.  To do this,
we must use condition A to show that the coefficients of
$(g,\Omega)$ are uniquely determined by the scattering data in
boundary normal coordinates.  The main step is to prove the
following theorem.

\begin{theorem}\label{boundary determination}
Let $(M_i, \p M, g_i,\Omega_i),\, i=1,2$ be non-trapping, analytic magnetic
systems with the same scattering data, and assume that
$v_0=(x_0,\theta_0)\in S \p M$ satisfies condition A in $M_1$. Then
at the point $x_0$, the coefficients of $(g_1,\Omega_1)$ and
$(g_2,\Omega_2)$, expressed in boundary normal coordinates, have the
same jets at $x_0$.  That is, their values at $x_0$ are the same,
and the values of all their derivatives of all orders at $x_0$ are
the same.
\end{theorem}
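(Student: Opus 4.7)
\emph{Strategy.} I work in the common boundary normal coordinate system, which exists because $\iota^* g_1 = \iota^* g_2$, and show by induction on the jet order $k$ that the $k$-jets of $(g_1, \Omega_1)$ and $(g_2, \Omega_2)$ at $x_0$ agree. The zero-jet agreement is immediate: the BN-gauge imposes $g_{nn} = 1$ and $g_{\alpha n} = 0$ identically, the tangential components of $g$ are those of $\iota^* g_i$, and $\iota^* \Omega_i$ fix the tangential components of $\Omega$.

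\emph{Probing with short magnetic geodesics.} For the inductive step, expand a magnetic geodesic in BN coordinates as
\begin{align*}
\gamma^i_{x,\theta}(t) = x^i + t\theta^i + \tfrac{1}{2}\,t^2\big(-\Gamma^i_{jk}(x)\theta^j\theta^k + Y^i_j(x)\theta^j\big) + \sum_{m \geq 3}\frac{t^m}{m!}A_m^i(x,\theta),
\end{align*}
where each coefficient $A_m$ depends polynomially on the $(m-1)$-jet of $(g,\Omega)$ at $x$, with the top-order derivatives entering linearly. Assume inductively that the $(k-1)$-jets agree. By Lemma~\ref{openA}, condition $A$ holds on an open neighborhood $U$ of $v_0$ in $\partial SM_1$, and we may use vectors from $U$ --- including near-tangential ones that produce arbitrarily short geodesic segments --- as probes. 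The exit condition $\gamma^n_{v,i}(T_i) = 0$, the exit-point match $\gamma_{v,1}(T_1) = \gamma_{v,2}(T_2)$ in the tangential BN coordinates, and the exit-direction match imposed by the common scattering data become, after all lower-order terms cancel by the induction hypothesis, a system of linear equations in the unknown $k$-th order derivatives of $(g,\Omega)$ at $x_0$. Varying $v$ over $U$ produces many such equations, and the travel times $T_i$ can be solved out (or compared via the implicit function theorem using non-conjugacy along $\gamma_{v_0}$) so that they do not themselves contribute unknowns at the relevant order.

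\emph{Main obstacle.} The crux of the argument is verifying non-degeneracy of the resulting linear system at each inductive step: the $k$-jet components of $g$ are constrained by the BN-gauge and those of $\Omega$ by the closedness condition $d\Omega = 0$, so one must isolate the genuinely free components and show that the test vectors from $U$ produce coefficient matrices of full rank. Condition $A$ is essential here, since non-conjugacy allows geodesics issuing from points near $x_0$ to sweep out a sufficiently rich family of directions to detect all free components of the $k$-jet. Executing this linear-algebra step cleanly --- organizing the polynomial dependence of $A_m$ on the derivatives of $g$ and $\Omega$ so that the top-order term decouples from the previously determined lower-order data --- is the principal technical labor of the proof.
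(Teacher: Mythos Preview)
Your approach differs substantially from the paper's, and it has a genuine gap.

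The paper does not Taylor-expand the geodesic. Instead it builds an action function $\rho(x,y)=\mathbf{A}_\zeta[\gamma_{x,y}]$ (with $\zeta$ a local magnetic potential, $d\zeta=\Omega$) satisfying the eikonal-type equation
\[
g^{ij}(x)\bigl(\partial_{x^i}\rho-\zeta_i(x)\bigr)\bigl(\partial_{x^j}\rho-\zeta_j(x)\bigr)=1.
\]
From the scattering relation one recovers $d_x\rho(x,y_0)$ on a (half-)neighborhood of $x_0$ in $\partial M$, and then successive $\partial_n$-differentiations of this equation yield $\partial_n^k g^{\alpha\beta}$ and $\partial_n^k\zeta_\alpha$ at $x_0$. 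The argument splits into two cases according to whether the first exit point $y_s$ of the tilted geodesic $\gamma_{v_s}$, $v_s=\cos(s)v_0+\sin(s)\nu$, stays bounded away from $x_0$ as $s\to 0$ (Case~I: long geodesics) or converges back to $x_0$ (Case~II: short geodesics).

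Your sketch implicitly assumes Case~II: you want ``near-tangential ones that produce arbitrarily short geodesic segments.'' But nothing guarantees such segments exist. If $\partial M$ is magnetically concave in the direction $v_0$, a slightly inward-tilted geodesic travels a macroscopic distance before exiting; there are then no short geodesics near $v_0$ to Taylor-expand, and a finite Taylor expansion at $x_0$ carries no information about the exit point and direction far away. The paper's Case~I handles exactly this situation via the eikonal mechanism together with non-conjugacy along the long geodesic $\gamma_{v_0}$; your proposal has no analogue.

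A related issue is your treatment of travel times. You write that ``$T_i$ can be solved out \ldots\ using non-conjugacy along $\gamma_{v_0}$,'' but travel times are \emph{not} part of the scattering data, and for long geodesics they cannot be extracted from local Taylor data at $x_0$. The paper circumvents this entirely by working with $d_x\rho$, which is determined on $\partial M$ by the initial vector $v_x$ (this \emph{is} in the scattering relation), never needing $\rho$ itself. Even in Case~II the paper does not match Taylor coefficients of the geodesic; it compares action integrals and invokes the minimizing property of short magnetic geodesics, using analyticity only to secure the needed weak convexity.
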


 This theorem is a direct generalization of a result by Plamen
Stefanov and Gunther Uhlmann for Riemannian metrics (with no
magnetic fields).  We extend their result by allowing magnetic
fields and by only assuming the scattering data and no magnetic
analog of length data.  In order to generalize the proof, some extra
condition is needed for the case when the magnetic geodesics near
$\gamma_{v_0}$ are short. For this paper the convenient extra
assumption is to assume that the system be analytic.  However it
should be noted that this is not at all an essential condition and
could be easily be replaced by a certain weak convexity condition.
See the remark after the proof for details.

A similar theorem was proved for magnetic systems by \cite{DPSU} in
which they assume strict convexity of the boundary with respect to
magnetic geodesics and take a certain action function as the given
data. We use a similar action function to prove this theorem, but do
not otherwise use the same techniques since we allow for the
possibility that $\gamma_{v_0}$ be a long magnetic geodesic. Rather
we follow Stefanov and Uhlmann's proof, the idea of which is to
consider a function $\rho(x,y)$ which gives travel times between
pairs of points. Fixing $y$, the function satisfies the eikonal
equation:
\begin{align*}
g^{ij}(x)(\p_{x_i}\rho)(\p _{x_j}\rho)=1.
\end{align*}
Using this equation, the scattering data tells us just enough about
the derivatives of $\rho$ to reconstruct the jet of $g$. The proof
here follows the same program, but with modifications to account for
magnetic fields and the lack of explicit travel time data. We
construct an analogous action function $\rho(x,y)$ which satisfies
an eikonal-type equation (equation \eqref{eikonal}) whose
coefficients come from $g$ and $\Omega$. We then use the scattering
data to infer enough information about the derivatives of $\rho$ to
recover the jets of $g$ and $\Omega$.

The next step in the proof of Theorem \ref{maintheorem} is the proof
of the following theorem.
\begin{theorem}\label{length recovery}
Let $(M_i,\p M, g_i,\Omega_i),\, i=1,2$ be two non-trapping, analytic
magnetic systems that satisfy condition B and have the same
scattering data. If $\phi_0: V_1\rightarrow V_2$ is an equivalence,
then corresponding elements of $\mathcal{G}_1$ and $\mathcal{G}_2$
have the same travel times.  In particular, the two systems have the
same travel time data $\ti{\ell}$.
\end{theorem}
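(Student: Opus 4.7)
Fix a magnetic geodesic $\gamma_1\in\mathcal{G}_1$ satisfying condition B, and let $\gamma_2\in\mathcal{G}_2$ be the corresponding magnetic geodesic in $M_2$. Because $\phi_0|_{\p M}=\mathrm{id}$ and the scattering data agree, both share an entry vector $v$ at an entry point $x$ and an exit vector $w$ at an exit point $y$; only the travel times $T_1$, $T_2$ might differ a priori. By openness of condition B (Lemma \ref{openA}) together with the density hypothesis $\hat{B}$ in both systems and the scattering bijection $\mathcal{G}_1\leftrightarrow\mathcal{G}_2$, we may further assume $\gamma_2$ also satisfies condition B. The goal is $T_1=T_2$; once established on this dense open subset, continuity of the travel time on non-trapping systems yields the statement for all of $\mathcal{G}$.

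The main idea is to propagate $\phi_0$ along $\gamma_1$ by analytic continuation. Cover $\gamma_1([0,T_1])$ by a finite chain of open balls $W_0,\dots,W_N\subset M_1'$ with $W_0\subset V_1$ a neighborhood of $x$ and $W_N$ a neighborhood of $y$. Since a local magnetic equivalence between real-analytic systems is a real-analytic diffeomorphism determined by its $1$-jet at any point, germs of such equivalences continue analytically along any analytic path in $M_1'$; inductively produce analytic magnetic equivalences $\phi_k\colon W_k\to M_2'$ matching $\phi_{k-1}$ on overlaps, and set $\phi=\phi_N$. By equivariance, $\phi\circ\gamma_1$ is a magnetic geodesic in $M_2'$ with the same initial data $(x,v)$ as $\gamma_2$ (since $\phi=\phi_0$ on $W_0$), so $\phi\circ\gamma_1=\gamma_2$ on the common domain; in particular $\phi(y)=\gamma_2(T_1)$.

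It remains to show $\phi(y)\in\p M_2$; this forces $\gamma_2(T_1)\in\p M_2$, and since $T_2$ is the first positive exit time of $\gamma_2\in\mathcal{G}_2$, we obtain $T_1\ge T_2$. A symmetric application of the same argument to the continuation of $\phi_0^{-1}$ along $\gamma_2$ in $M_2'$ gives $T_2\ge T_1$, hence $T_1=T_2$. For the boundary preservation, take local analytic defining functions $F_i$ for $\p M_i$; the relation $F_2\circ\phi_0=F_1\cdot H$ holds on $V_1$ for some nonvanishing analytic $H$, by virtue of $\phi_0|_{\p M}=\mathrm{id}$. Because a tubular neighborhood of $\gamma_1$ in $M_1'$ is simply connected, $F_1$, $H$, and $\phi$ continue analytically there as single-valued functions, and the identity $F_2\circ\phi=F_1\cdot H$ persists throughout by the identity principle. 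Evaluating at $y$, where $F_1(y)=0$, yields $F_2(\phi(y))=0$, so $\phi(y)\in\p M_2$.

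The principal obstacle is executing the analytic continuation so that $\phi$ extends as a single-valued magnetic equivalence reaching a neighborhood of $y$ as a genuine diffeomorphism. Condition B is indispensable here: by ruling out conjugate points of maximal order $n-1$ along $\gamma_1$, it prevents the magnetic exponential from $y$, and with it the candidate extension of $\phi_0$, from becoming totally degenerate in the $(n-1)$-dimensional transverse directions. Such a degeneracy would otherwise obstruct the patching of the local equivalences $\phi_k$ and could introduce monodromy, spoiling both the equivariance $\phi\circ\gamma_1=\gamma_2$ at the endpoint and the boundary preservation just derived.
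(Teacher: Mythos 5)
There is a genuine gap at the central step of your argument: the assertion that germs of magnetic equivalences ``continue analytically along any analytic path.'' Germs of local isometries between analytic manifolds are indeed determined by their $1$-jets, but they do \emph{not} automatically continue along paths: the continuation requires that at each stage a local equivalence on $W_k$ actually \emph{exists}, and when the target is an incomplete manifold such as $M_2'=M_2\cup V_2$ the continuation can simply run out of the target. That is exactly what happens in the scenario you must exclude: if $T_1>T_2$, the image curve $\gamma_2$ exits $M_2$ at time $T_2$ and leaves the thin collar $V_2$ shortly thereafter, so your chain of maps $\phi_k$ ceases to exist before reaching a neighborhood of $y$ --- and the failure of the continuation \emph{is} the bad scenario, not a contradiction you can harvest. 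The paper's own counterexample (Figure 1: $M_1\subset S^2$ containing the southern hemisphere, $M_2\subset \mathbb{R}P^2$ with the same boundary) exhibits the mechanism: there the local isometry continues wherever the target permits (the covering $S^2\to\mathbb{R}P^2$ is a local isometry everywhere), all hypotheses except condition B hold, and corresponding geodesics nevertheless differ in length by $\pi$. This example also shows that your invocation of condition B does no work: conjugate points of order $n-1$ pose no obstruction whatsoever to patching local isometries (the covering map above is a local isometry across the antipodal conjugate points), so the paragraph claiming condition B prevents the degeneracy of the patching is not attached to any actual step, and without a concrete use of condition B the statement is false. There is also a structural circularity: extending $\phi_0$ into the interior along geodesics is essentially the content of Theorem \ref{equivalence construction}, which the paper can prove only \emph{after} the travel times are recovered --- Theorem \ref{length recovery} is an input to that extension, not a consequence of it.

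The paper avoids all of this by analytically continuing \emph{linear} data rather than the map itself. Magnetic Jacobi fields satisfy a linear second-order ODE with analytic coefficients along the geodesics of the extended, non-trapping systems, so they continue for all $t$ unconditionally --- no existence issue arises. Anchoring magnetic Jacobi tensors at the exit points, $\mathcal{J}_i(T_i)=0$ and $\mathcal{J}_i'(T_i)=\mathrm{Id}$, one uses $\phi_0$ together with the equality of scattering data to show that corresponding Jacobi fields agree at their endpoints up to a component parallel to $\gamma'$, hence $\mathcal{J}_1(t)=\mathcal{J}_2(t)$ for small $t$ near the entry; analytic continuation propagates this identity along the whole geodesics. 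If $T_1<T_2$, then $\mathcal{J}_2(T_1)=\mathcal{J}_1(T_1)=0$, i.e.\ $\gamma_2(T_1)$ would be conjugate to $\gamma_2(T_2)$ of order $n-1$ --- precisely what condition B forbids, which forces $T_1=T_2$. This is where condition B genuinely enters, and it is the ingredient for which your proposal has no substitute. (Your final step --- equality on a dense set of geodesics plus continuity of the travel time on $\mathcal{G}$ --- matches the paper's and is fine.)
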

Elements of $\mathcal{G}_1$ and $\mathcal{G}_2$ are corresponding if
their initial and final vectors are related by $\phi_0$.  The next
theorem finishes the proof of \ref{maintheorem}.

\begin{theorem}\label{equivalence construction}
Let $(M_i,\p M, g_i,\Omega_i),\, i=1,2$ be two non-trapping, analytic
magnetic systems with the same scattering and travel time data.  If
$\phi_0: V_1\rightarrow V_2$ is an equivalence, then $\phi_0$
extends to an equivalence $\phi:M_1'\rightarrow M_2'$.
\end{theorem}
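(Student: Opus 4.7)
The plan is to extend $\phi_0$ along magnetic geodesics from the boundary, using the matching of scattering and travel time data together with real-analyticity.

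First I would set up a candidate extension. Because $\phi_0|_{\partial M} = \mathrm{id}$ and $\phi_0^* g_2 = g_1$ on $V_1$, the differential of $\phi_0$ at boundary points defines a canonical diffeomorphism $F : \partial_+ SM_1 \to \partial_+ SM_2$, intertwining the scattering relations (since the scattering data agree) and preserving travel times (by Theorem \ref{length recovery}, $\ell_1(v)=\ell_2(F(v))$). For each $v \in \partial_+ SM_1$ and $t \in [0,\ell_1(v)]$, set
\begin{equation*}
\phi\bigl(\gamma^1_v(t)\bigr) := \gamma^2_{F(v)}(t).
\end{equation*}
Non-trapping ensures every point of $M_1$ is reached by some such geodesic. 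On the collar $V_1 \cap M_1$ the formula agrees with $\phi_0$, because a magnetic equivalence sends $\gamma^1_v$ to $\gamma^2_{F(v)}$ with matched parameter.

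The heart of the proof is showing this formula is well defined, i.e., that its value is independent of which boundary geodesic one picks at a point lying on many of them. Let $A \subset M_1$ denote the set of points where the definition is unambiguous. Then $V_1 \cap M_1 \subset A$, and $A$ is closed by continuity of the magnetic flow. For openness at $p \in A$, consider a magnetic geodesic $\gamma^1_{v_0}$ with $p = \gamma^1_{v_0}(t_0)$. The map $(v,t)\mapsto \gamma^1_v(t)$ is an analytic submersion onto a neighborhood of $p$; restricting to a transverse $n$-dimensional slice through $(v_0,t_0)$ yields an analytic local diffeomorphism from that slice to a neighborhood $U$ of $p$, which makes $\phi|_U$ an analytic map. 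Since $\phi|_U$ coincides with $\phi_0$ on the portion of $U$ lying in $V_1$ and since all data are real-analytic, the identity theorem for real-analytic maps forces $\phi|_U$ to be the unique real-analytic continuation of $\phi_0$—in particular independent of any alternative slice or alternative geodesic through points of $U$. The conjugate locus of the magnetic exponential map (where the submersion degenerates) sits in a proper real-analytic subvariety and can be avoided when choosing the slice.

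With $\phi$ well defined on $M_1$, I would then verify it is a magnetic equivalence. A symmetric construction starting from $\phi_0^{-1}$ produces a map $\psi:M_2\to M_1$ satisfying $\psi\circ\phi=\mathrm{id}$ and $\phi\circ\psi=\mathrm{id}$ (both identities follow immediately from the defining formulas applied twice), so $\phi$ is a bijection with analytic local inverse. The identities $\phi^* g_2 = g_1$ and $\phi^* \Omega_2 = \Omega_1$ hold on the open set $V_1 \cap M_1$ because they hold for $\phi_0$, and they extend to all of $M_1'$ by the identity theorem for real-analytic tensors. Gluing $\phi$ on $M_1$ with $\phi_0$ on $V_1$—which agree on the overlap by construction—produces the sought equivalence $\phi : M_1' \to M_2'$. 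The main obstacle is the well-definedness step: multiple magnetic geodesics through a common interior point could a priori map to different image points, and conjugate loci complicate the choice of local coordinates. Real-analyticity is what saves the argument, both by confining conjugate behavior to a proper analytic subvariety and by promoting local agreement with $\phi_0$ to a unique global extension via the identity theorem.
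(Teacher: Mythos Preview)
Your overall architecture matches the paper's: define $\phi$ by transporting along magnetic geodesics via $\phi_0$, show this is well defined, and then use analytic continuation to propagate $\phi^*g_2=g_1$ and $\phi^*\Omega_2=\Omega_1$ from $V_1$ to all of $M_1'$. The construction of the inverse and the final analytic-continuation step are fine.

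The gap is in your openness argument for the set $A$ of points where the definition is unambiguous. You pick a geodesic $\gamma^1_{v_0}$ through $p$, take an $n$-dimensional slice through $(v_0,t_0)$, and obtain an analytic map $\phi|_U$ on a small neighborhood $U$ of $p$. You then write ``since $\phi|_U$ coincides with $\phi_0$ on the portion of $U$ lying in $V_1$ \ldots\ the identity theorem forces $\phi|_U$ to be the unique real-analytic continuation of $\phi_0$.'' But if $p$ lies in the deep interior of $M_1$, then $U\cap V_1=\emptyset$, so this premise is vacuous and no comparison with $\phi_0$ is available on $U$. Moreover, even if you enlarge to a full tube along $\gamma^1_{v_0}$ reaching back to $V_1$, you would still need to compare it with the tube coming from a \emph{different} geodesic through a nearby point $q\in U$; the two tubes need not overlap anywhere except near $q$, and real-analytic continuation is not automatically path-independent. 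So the argument, as written, does not establish that every geodesic through $q$ yields the same image.

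The paper avoids this by a direct comparison rather than an open--closed argument. Fixing $x_0$ and two directions $\theta_0,\theta$, it considers the two backward-traced geodesics and their $\phi_0$-images, and studies the real-analytic functions
\[
\rho_1(t)=d_{g_1}^2\bigl(\gamma_{z_0,\xi_0}(t),\gamma_{z,\xi}(t)\bigr),\qquad
\rho_2(t)=d_{g_2}^2\bigl(\gamma_{y_0,\eta_0}(t),\gamma_{y,\eta}(t)\bigr).
\]
For small $t$ both pairs of curves lie in $V_1$ (resp.\ $V_2$), where $\phi_0$ is an isometry, so $\rho_1(t)=\rho_2(t)$ there; analytic continuation in the single variable $t$ then gives equality up to $t=T$, and since $\rho_1(T)=0$ one gets $\rho_2(T)=0$, i.e.\ the two image points coincide. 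This squared-distance trick (together with a lemma guaranteeing analyticity of $d^2$ on a uniform tube around the diagonal) is exactly the missing ingredient that makes well-definedness go through without any appeal to global uniqueness of analytic continuation.
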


\section{Theorem \ref{boundary determination}: Recovery of the magnetic system in a neighborhood of the boundary}
\subsection{Defining an action function on magnetic geodesic
segments}The first step shall be to construct an action function
which serves the role of the travel time function used by Stefanov
and Uhlmann. Subscripts shall be omitted as long as all statements
apply equally to both manifolds. We consider our magnetic system to
be extended to an open manifold $M'$ containing $M$.  Let
$\gamma=\{\gamma(t):0\leq t\leq T\}$ be a magnetic geodesic segment
in $M'$.  For our proof, we shall need a $1$-form $\zeta$, defined
in a neighborhood of $\gamma$, satisfying $d\zeta=\Omega.$  By the
Poincar\'e lemma, such a $1$-form exists if $\gamma$ has no
self-intersections.  If $\gamma$ does intersect itself, then we
circumvent the topological problem through the construction
described in the following paragraphs.

Let $\ti{\gamma}$ denote the segment in $\mathbb{R}^n$ given by
\begin{align*}
\ti{\gamma}(t)=(0,\dots,\, 0,\,t)\quad \text{for} \quad 0\leq t\leq
T.
\end{align*}
Then there exists a neighborhood $U\subset
\mathbb{R}^n$ of $\ti{\gamma}$ and an immersion $\psi:U\rightarrow
M'$ that satisfies $\psi(0,t)=\psi\circ\ti{\gamma}(t)=\gamma(t)$.
For example, we can construct such a map in the following way.
Extend $\gamma$ to an open interval containing $[0,T]$ and choose
vectors $X_i(t)$ along $\gamma(t)$ such that
$(X_1(t),\dots,X_{n-1}(t),\gamma'(t))$ are linearly independent
for all $t$.  Then define
\begin{align*}
\psi(x^1,\dots,x^{n-1},x^n)=\exp_{\gamma(x^n)} \left( \sum\limits_{i=1}^{n-1}x^iX_i(x^n) \right ).
\end{align*}
The topology of $U$ is trivial so by the Poincar\'e lemma, there
exists $\zeta$ such that $d\zeta=\psi^*\Omega$.  We call $\zeta$
a {\em magnetic potential} in a neighborhood of $\gamma$.

Let $\ti{x}$ be coordinates for $U$, and $(\ti{x},\ti{\xi})$ the
corresponding natural coordinates for $T^*U$.  Using $\psi$ we can
pull back structures from $M'$ to $U$.  In particular, we obtain an
exact symplectic form
\begin{align*}
\ti{\omega}=d\ti{x}\wedge
d\ti{\xi}+\psi^*\Omega=-d\,(\ti{\xi}\,d\ti{x}-\zeta)
\end{align*}
and a Hamiltonian function
$\ti{H}=\frac{1}{2}|\ti{\xi}|^2_{\ti{g}}$. Together, these generate
Hamiltonian curves whose projections to $U$ are the magnetic
geodesics of the magnetic system $(U,\psi^*g,\psi^*\Omega)$.

We parametrize the magnetic geodesic segments in $M'$ near $\gamma$
by their initial vector $v\in SM'$ and their length $\tau$.
\begin{align*}
(v,\tau)\mapsto \gamma_{v,\tau}= \{\gamma_v(t): 0\leq t\leq \tau\}.
\end{align*}
Let $(v_0,\tau_0)$ correspond to the original segment $\gamma$ about
which $\zeta$ was constructed.  For $(v,\tau)$ in some neighborhood
$\mathcal{M}$ about $(v_0,\tau_0)$, the magnetic geodesic segments
$\gamma_{v,\tau}$ in $M'$ can be uniquely pulled back via $\psi$ to
magnetic geodesic segments $\ti{\gamma}_{v,\tau}$ in $U$. Let
$\ti{c}_{v,\tau}$ denote the corresponding Hamiltonian curve in
$T^*U$.

Letting $\mathcal{N}$ denote the neighborhood of magnetic geodesic
segments
\begin{align*}
\mathcal{N}=\{\gamma_{v,\tau}:(v,\tau)\in\mathcal{M}\},
\end{align*}
we define the action functional
$\bf{A}_{\zeta}:\mathcal{N}\rightarrow \mathbb{R}$ by
\begin{align*}
\bf{A}_\zeta[\gamma_{v,\tau}]=\tau-\int_{\ti{\gamma}_{v,\tau}}\zeta.
\end{align*}
In the case that $\Omega$ is exact, $\zeta$ can be defined on all of
$M'$ so that $\bf{A}_\zeta$ is well-defined for all magnetic
geodesics.  But in general this is not possible so the action can
only be defined near one fixed curve.  In either case,
$\bf{A}_\zeta$ depends on the choice of $\zeta$ which is not
uniquely determined by the magnetic system.

\begin{proposition}
\begin{align*}
\bf{A}_\zeta[\gamma_{v,\tau}]=\int_{\ti{c}_{v,\tau}}(\ti{\xi}\,d\ti{x}-\pi^*\zeta)
\end{align*}
\end{proposition}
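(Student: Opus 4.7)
The plan is a direct computation, splitting the right-hand integral into two pieces using the decomposition $\tilde{\xi}\,d\tilde{x} - \pi^*\zeta$. Since $\tilde{c}_{v,\tau}$ projects under $\pi$ to the magnetic geodesic $\tilde{\gamma}_{v,\tau}$ in $U$, the naturality of pullback immediately gives
\begin{align*}
\int_{\tilde{c}_{v,\tau}} \pi^*\zeta = \int_{\tilde{\gamma}_{v,\tau}} \zeta,
\end{align*}
which accounts for the second term in $\mathbf{A}_\zeta$. The whole content of the proposition is therefore to show that
\begin{align*}
\int_{\tilde{c}_{v,\tau}} \tilde{\xi}\,d\tilde{x} = \tau.
\end{align*}

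To prove this, I would analyze the Hamiltonian vector field generated by $\tilde{H} = \tfrac{1}{2}|\tilde{\xi}|^2_{\tilde{g}}$ with respect to $\tilde{\omega} = d\tilde{x}\wedge d\tilde{\xi} + \psi^*\Omega$. Because the magnetic contribution $\psi^*\Omega$ is horizontal (involves only $d\tilde{x}$'s), pairing $\iota_{X_{\tilde{H}}}\tilde{\omega} = dH$ against $\partial_{\tilde{\xi}_k}$ gives the unchanged Legendre relation $\dot{\tilde{x}}^k = \tilde{g}^{kj}\tilde{\xi}_j$, equivalently $\tilde{\xi}_i = \tilde{g}_{ij}\dot{\tilde{x}}^j$. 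Consequently, along $\tilde{c}_{v,\tau}$,
\begin{align*}
\tilde{\xi}_i \dot{\tilde{x}}^i = \tilde{g}_{ij}\dot{\tilde{x}}^i\dot{\tilde{x}}^j = |\dot{\tilde{x}}|^2_{\tilde{g}} = 2\tilde{H} = 1,
\end{align*}
the last equality because we have restricted to the unit cosphere bundle. The $\tau$ parametrization is the arclength parametrization of $\tilde{\gamma}_{v,\tau}$, so integrating $\tilde{\xi}_i \dot{\tilde{x}}^i \equiv 1$ from $0$ to $\tau$ yields $\tau$ as claimed.

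Assembling the two pieces completes the proof. There is no real obstacle; the only subtlety is checking that the Lorentz-force modification of the symplectic form does not affect the Legendre transform, which is immediate from the coordinate form of $\tilde{\omega}$: only the horizontal block of the symplectic matrix is altered, while the block coupling $\partial_{\tilde{x}}$ to $\partial_{\tilde{\xi}}$ is untouched, so Hamilton's equation for $\dot{\tilde{x}}$ is the standard one and gives the usual Legendre duality. Once that is observed, the identity $\int\tilde{\xi}\,d\tilde{x} = \tau$ on energy surface $\tilde{H} = 1/2$ is automatic, and the proposition follows.
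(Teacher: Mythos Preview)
Your proof is correct and follows essentially the same route as the paper's: both arguments split the integrand, use $\tilde{\gamma}_{v,\tau}=\pi\circ\tilde{c}_{v,\tau}$ to reduce $\int_{\tilde{c}_{v,\tau}}\pi^*\zeta$ to $\int_{\tilde{\gamma}_{v,\tau}}\zeta$, and then observe that along $\tilde{c}_{v,\tau}$ one has $\tilde{\xi}\,d\tilde{x}=|\tilde{\xi}|_{\tilde g}^2\,dt=dt$, whence the first piece integrates to $\tau$. Your only addition is the explicit check that the magnetic term $\psi^*\Omega$ in $\tilde\omega$ does not alter the Legendre relation $\dot{\tilde{x}}^k=\tilde{g}^{kj}\tilde{\xi}_j$; the paper takes this for granted.
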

\begin{proof}
Along the curve $\ti{c}_{v,\tau}$, we have
$\ti{\xi}\,d\ti{x}=|\ti{\xi}|^2_g\,dt=dt$.  Therefore, noting that
$\ti{\gamma}_v(t)=\pi\circ\ti{c}_v(t)$, the integral on the right is
equal to
\begin{align*}
\tau-\int_{\ti{c}_{v,\tau}}\pi^*\zeta=\tau-\int_{\ti{\gamma}_{v,\tau}}\zeta.
\end{align*}
\end{proof}

\subsection{The first variation of $\bf{A}_{\zeta}$}
Let $\gamma_s(t): -\epsilon\leq s\leq \epsilon,\,\,0\leq t \leq
\tau_s$ be a smooth $1$-parameter family of unit-speed magnetic
geodesics in $\mathcal{N}$. Correspondingly, there is a smooth
$1$-parameter family of curves $\ti{c}_s(t)$ in $T^*U$, and by the
lemma, we have
\begin{align*}
\phi(s)=\bf{A}_\zeta[\gamma_s]=\int_{\ti{c}_s}(\ti{\xi}\,d\ti{x}-\pi^*\zeta).
\end{align*}
Let $a(s)$ be the curve $\ti{c_s}(0)$ and $b(s)$ be the curve
$\ti{c_s}(\tau_s)$.  Now let $0<|h|<\epsilon$, and consider the
surface $\sigma$ with parametrization $(s,t)\mapsto\ti{c}_s(t):
0\leq s\leq h,\,\,0\leq t\leq \tau_s$.

According to Stokes' theorem,
\begin{align*}
\left(\int_{\ti{c}_0}+\int_b-\int_{\ti{c}_h}-\int_a\right)(\ti{\xi}\,d\ti{x}-\pi^*\zeta)=\int_\sigma
(d\ti{\xi}\wedge d\ti{x}-\pi^*\Omega).
\end{align*}
It follows that
\begin{align*}
\frac{\phi(h)-\phi(0)}{h}=\frac{1}{h}\left(\int_b-\int_a\right)(\ti{\xi}\,d\ti{x}-\pi^*\zeta)-\frac{1}{h}\int_\sigma
(d\ti{\xi}\wedge d\ti{x}-\pi^*\Omega).
\end{align*}
The surface integral on the right-hand side is equal to $0$. To show
this we note that the $2$-form being integrated is $-\,\ti{\omega}$,
the symplectic form. We also note that the Hamiltonian vector field
$X_{\ti{H}}=\frac{\p}{\p t}c_s(t)$ lies tangent to $\sigma$.  By
definition,
\begin{align*}
\ti{\omega}(X_{\ti{H}},\,\cdot\,)=d\ti{H}.
\end{align*}
Since $\sigma$ is contained in the level surface $\ti{H}=1/2$, we
conclude that the pull back of $\ti{\omega}$ to $\sigma$ must be
$0$.  Hence we obtain
\begin{align*}
\frac{\phi(h)-\phi(0)}{h}=\frac{1}{h}\left(\int_b-\int_a\right)(\ti{\xi}\,d\ti{x}-\pi^*\zeta).
\end{align*}
Taking the limit as $h\rightarrow 0$, we find
\begin{align*}
\phi'(0)=(\ti{\xi}\,d\ti{x}-\pi^*\zeta)|\,^{b'(0)}_{a'(0)}
\end{align*}
$\ti{c}_s(t)=(\ti{\gamma}_s(t),\ti{\xi}_s(t))$ where $\ti{\xi}_s(t)$
is the covector corresponding to $\p_t\ti{\gamma}_s(t)$ by $g$.
Therefore
\begin{align}\label{1stvar}
\phi'(0)=[\langle
\p_t\ti{\gamma}_0,\,\cdot\,\rangle_g-\zeta\,]|^{\pi_*b'(0)}_{\pi_*a'(0)}
\end{align}
In terms of the family of magnetic geodesics
$\{\ti{\gamma}_s(t)\}_s$, $\,\pi_*a'(0)$ and $\pi_*b'(0)$
are just the variations in the initial and terminal point,
respectively, of the curves.

Now suppose that $\gamma=\gamma_{v_0,\tau_0}$ has endpoints $x_0$
and $y_0$ which are not conjugate to each other.  Then for $(x,y)$
in a sufficiently small neighborhood around $(x_0,y_0)$, there
exists a magnetic geodesic $\gamma_{x,y}$ joining $x$ to $y$ which
is smoothly dependent on $x$ and $y$. Let $\theta=\theta_{x,y}\in
T_x M'$ be its initial vector and define the smooth function
\begin{align*}
\rho(x,y)=\bf{A}_\zeta[\gamma_{x,y}].
\end{align*}
Let $w\in T_xM'.$  If we fix $y$ and take the differential of $\rho$
with respect to $x$, then by \eqref{1stvar}, we obtain:
\begin{align}\label{rhoder}
\langle d_x\rho,w\rangle=-\langle
\theta,w\rangle_g+\langle\zeta,w\rangle
\end{align}
Solving for $\theta$, we find
\begin{align}
-\langle \theta,\,\cdot\,\rangle_g=d_x\rho-\zeta .
\end{align}
In particular since $\gamma$ is a unit-speed curve, $\theta$ has
unit length, so
\begin{align*}
|d_x\rho-\zeta|_g^2=1.
\end{align*}
Written another way,
\begin{align}\label{eikonal}
g^{ij}(x)(\p_{x^i}\rho-\zeta_i(x))(\p_{x^j}\rho-\zeta_j(x))=1.
\end{align}
This is the eikonal-type equation that we need.

\subsection{Choosing compatible magnetic potentials} Now we consider again our two manifolds
$M_1$ and $M_2$ with their respective metrics and magnetic fields
and extensions $M_1'$ and $M_2'$.  We let $v_0=(x_0,\theta_0)\in S
\p M$ be a vector satisfying the hypothesis of the theorem. For each
$i=1,2$, let $\gamma_i\subset M_i'$ be the magnetic geodesic with initial vector $v_0$, and let $\zeta_i$ be an arbitrarily chosen
magnetic potential in a neighborhood of $\gamma_i$. The next
lemma shows that the two potentials can be made compatible with each
other in a certain sense.  Recall that $(x',x^n)$ denote boundary
normal coordinates near $x_0$.

\begin{lemma}\label{potatbo}
If the two magnetic systems have the same scattering data, then
there exist magnetic potentials $\zeta_1,\zeta_2$ defined in
neighborhoods of $\gamma_1$ and $\gamma_2$ respectively
which satisfy
\begin{align*}
\iota_1^*\zeta_1=\iota_2^*\zeta_2,\quad \text{and}
\end{align*}
\begin{align*}
\langle\zeta_1,\p_{x^n}\rangle=\langle\zeta_2,\p_{x^n}\rangle,
\end{align*}
in a neighborhood of the point $x_0$.
\end{lemma}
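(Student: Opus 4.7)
The plan is to produce $\zeta_2$ from any initial choice $\zeta_2'$ by adding an exact form $df$. Since $d(df)=0$, this modification preserves $d\zeta_2=\Omega_2$, so the problem reduces to picking $f$ on a neighborhood of $x_0$ realizing the two compatibility conditions and then extending smoothly to a neighborhood of $\gamma_2$ by a cutoff.

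First I would fix any magnetic potential $\zeta_1$ near $\gamma_1$ and any initial $\zeta_2'$ near $\gamma_2$ via the construction of the preceding subsection. Using the shared boundary normal coordinates $(x',x^n)$ on the collar $\p M\times(-\epsilon,\epsilon)$, consider the $1$-form
\begin{align*}
\alpha=\iota_1^*\zeta_1-\iota_2^*\zeta_2'
\end{align*}
on a neighborhood of $x_0$ in $\p M$. Because the scattering data agrees, we have $\iota_1^*\Omega_1=\iota_2^*\Omega_2$, and hence $d\alpha=\iota_1^*\Omega_1-\iota_2^*\Omega_2=0$. The Poincar\'e lemma on a small disk about $x_0$ in $\p M$ then yields a smooth function $f_\p$ with $df_\p=\alpha$.

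Next I would extend $f_\p$ into $V_2$ by integrating in the normal direction:
\begin{align*}
f(x',x^n)=f_\p(x')+\int_0^{x^n}\bigl(\langle\zeta_1,\p_{x^n}\rangle-\langle\zeta_2',\p_{x^n}\rangle\bigr)(x',s)\,ds.
\end{align*}
Then $f|_{x^n=0}=f_\p$ gives $\iota_2^*(\zeta_2'+df)=\iota_2^*\zeta_2'+df_\p=\iota_1^*\zeta_1$, while $\p_{x^n}f=\langle\zeta_1-\zeta_2',\p_{x^n}\rangle$ gives $\langle\zeta_2'+df,\p_{x^n}\rangle=\langle\zeta_1,\p_{x^n}\rangle$, both on a neighborhood of $x_0$.

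Finally I would multiply $f$ by a smooth cutoff $\chi$ equal to $1$ on a still smaller neighborhood of $x_0$ and compactly supported in the domain of $f$; extending $\chi f$ by zero to the full neighborhood of $\gamma_2$ on which $\zeta_2'$ is defined, the $1$-form $\zeta_2:=\zeta_2'+d(\chi f)$ is a magnetic potential near $\gamma_2$ satisfying both required identities on a neighborhood of $x_0$. The crucial input is the equality $\iota_1^*\Omega_1=\iota_2^*\Omega_2$, a direct consequence of equal scattering data, which closes $\alpha$ and permits the Poincar\'e lemma; beyond this closedness check the argument is purely mechanical, consisting of the Poincar\'e lemma on $\p M$ followed by an ODE integration along the normal geodesics.
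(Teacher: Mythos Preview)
Your proof is correct and follows essentially the same approach as the paper: both start from arbitrary potentials, use $\iota_1^*\Omega_1=\iota_2^*\Omega_2$ to show the boundary difference is closed, apply the Poincar\'e lemma to obtain a function on $\p M$, extend it across the collar by the normal ODE (you write the integral explicitly, the paper states the PDE), and finish with a cutoff. The only cosmetic difference is that the paper modifies $\zeta_1$ rather than $\zeta_2$.
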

\begin{proof}
Starting with arbitrary $\zeta_1$ and $\zeta_2$, note that
\begin{align*}
d(\iota_1^*\zeta_1-\iota_2^*\zeta_2)=\iota_1^*\Omega_1-\iota_2^*\Omega_2=0.
\end{align*}
Therefore, there exists a smooth function $f_0$ defined in a
neighborhood of $x_0$ in $\p M$ such that
\begin{align*}
\iota_1^*\zeta_1-\iota_2^*\zeta_2=df_0.
\end{align*}
We extend $f_0$ in a neighborhood of $\p M$ according to the first
order PDE
\begin{align*}
\p_{x^n}f  &=
\langle\zeta_1,\p_{x^n}\rangle-\langle\zeta_2,\p_{x^n}\rangle, \\
f|_{\p M}&=f_0.
\end{align*}
Away from the boundary we apply a cutoff function so that $f$ is
well-defined over a neighborhood of $\gamma_1$.  Letting $\zeta_1'=\zeta_1+df$, we
find that $\zeta_1'$ and $\zeta_2$ satisfy the requisite conditions.
\end{proof}

\subsection{Proof of Theorem \ref{boundary determination}}
For the remainder of the proof, subscripts distinguishing the two
manifolds shall be omitted.  We are given a magnetic system $(M,\, \p M, \,
g,\, \Omega)$ with known scattering data $(\p M,\, \iota^*g,\,
\iota^*\Omega,\, \ti{\Sigma})$ and extension $(M',\p M, \, g,\, \Omega)$.
We will be working in boundary normal coordinates near a point
$x_0\in \p M$.  In these coordinates, we may regard $g|_{\p M}$ as
known since $\iota^*g$ is included in the scattering
data.

We start with a lemma.  Let $v_0\in \p SM$ be a unit vector at the
boundary, and let $\zeta$ be a magnetic potential in a neighborhood
of $\gamma_{v_0}$. Let $y_0$ be a point on $\gamma_{v_0}\cap \p M$
with $y_0\neq x_0$ and assume that $x_0$ and $y_0$ are not conjugate
along $\gamma_{v_0}$. Then there exist respective neighborhoods
$V,\,W$ about $x_0,\,y_0$ in the extended manifold $M'$ such that
for any $x\in V,\, y\in W$, there is a magnetic geodesic
$\gamma_{x,y}$ connecting them.  It depends smoothly on the
endpoints and is equal to $\gamma_{v_0}|_{[x_0,y_0]}$ if
$(x,y)=(x_0,y_0)$.  As discussed above, the function
$\rho(x,y)=\bf{A}_\zeta[\gamma_{xy}]$ is smooth with derivative
given in equation \eqref{rhoder}.

According to lemma \ref{potatbo}, we may regard $\iota^*\zeta$ as
known data. What's more, we may regard $\zeta(\p_{x^n})$ as known in
a neighborhood of $x_0$ even off the boundary. In particular,
$\iota^*\zeta$ and $\zeta(\p_{x^n})|_{\p M}$ together give us
$\zeta|_{\p M}$ in boundary normal coordinates. If we let
$\hat{\zeta}$ denote the vector field corresponding to $\zeta$ via
$g$, then that is also known on $\p M$.  What we need for the
theorem is the full jet of $g$ and $\zeta$ at $\p M$. But since we
already know $g$ and $\zeta$ on $\p M$, what remains is to recover
their derivatives in the normal direction. That is, we need:
\begin{align*}
\p_{x^n}^kg|_{\p M},\quad \p_{x^n}^k\zeta|_{\p M},\quad k=1,2,\dots
\end{align*}
Actually, what we shall get are these
normal derivatives on a half-neighborhood of $x_0$ in $\p M$. This
is enough to apply tangential derivatives to get the full jets at
$x_0$.

\begin{lemma}
Let $v_0\in \p SM$ be a unit vector over the point $x_0\in \p M$
which points strictly inwards, i.e. $\langle v_0,\nu\rangle_g>0$.
Let $y_0$ be the first point of intersection between $\gamma_{v_0}$
and $\p M$.  Then if $x_0$ and $y_0$ are not conjugate, the scattering
data uniquely determine $d_x\rho(x,y_0)$ for $x\in \p M$ sufficiently
close to $x_0$.
\end{lemma}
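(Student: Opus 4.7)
The plan is to use the derivative formula \eqref{rhoder}. Evaluated at a boundary point $x\in\p M$ near $x_0$, it reads
\[
\lan d_x\rho(x,y_0),w\ran=-\lan\theta_{x,y_0},w\ran_g+\lan\zeta(x),w\ran\qquad(w\in T_xM'),
\]
where $\theta_{x,y_0}\in S_xM'$ is the initial vector of the magnetic geodesic $\gamma_{x,y_0}$ supplied by the smooth family of connecting geodesics (which exists because $x_0$ and $y_0$ are non-conjugate). It is therefore enough to determine both $\zeta(x)$ and $\theta_{x,y_0}$ from the scattering data.

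The boundary values of $\zeta$ are already in hand: Lemma \ref{potatbo} gives $\iota^*\zeta$ and $\lan\zeta,\p_{x^n}\ran|_{\p M}$ in a neighborhood of $x_0$, and in boundary normal coordinates these two pieces together reconstruct $\zeta$ as a covector at each point of $\p M$ near $x_0$. Since $\iota^*g$ is part of the scattering data, every pairing appearing on the right-hand side above is then computable from the data alone.

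To recover $\theta_{x,y_0}$, let $\mathcal U$ be a small neighborhood of $v_0$ in $\inwards$ and consider the first-exit map
\[
F\colon\mathcal U\to\p M,\qquad F(x,\theta)=\gamma_{x,\theta}(\ell(x,\theta)),
\]
which is encoded in $\ti\Sigma$ (the basepoint of $\Lambda^{-1}$ applied to the outgoing vector with the smallest positive $t$). I claim that for each $x\in\p M$ sufficiently close to $x_0$ the equation $F(x,\theta)=y_0$ has a unique solution $\theta$ near $v_0$, namely $\theta_{x,y_0}$. Existence is supplied by the smooth family $\gamma_{x,y}$; that $y_0$ is genuinely the \emph{first} exit of $\gamma_{x,\theta_{x,y_0}}$ holds because $\gamma_{v_0}$ meets $\p M$ only at $y_0$ strictly after $x_0$ and $C^0$-small perturbations of the trajectory produce no new intersections with $\p M$ in the intervening portion. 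For uniqueness it suffices to show that $F(x_0,\cdot)\colon S_{x_0}M\cap\mathcal U\to\p M$ is a local diffeomorphism at $v_0$, since continuity in the basepoint and the implicit function theorem then yield the analogous property for $x$ near $x_0$.

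The local-diffeomorphism claim is where non-conjugacy enters. A vector $\theta'(0)\in T_{v_0}S_{x_0}M$ in $\ker dF(x_0,\cdot)$ generates a magnetic Jacobi field $J$ along $\gamma_{v_0}$ with $J(0)=0$, $\nabla_t J(0)=\theta'(0)$, and (by transversality of $\gamma_{v_0}'(\tau_0)$ to $T_{y_0}\p M$) $J(\tau_0)\in\R\gamma_{v_0}'(\tau_0)$. Identifying $J(\tau_0)=d\exp^\mu_{x_0}(\tau_0\theta'(0))$ and $\gamma_{v_0}'(\tau_0)=d\exp^\mu_{x_0}(v_0)$ at the point $\tau_0v_0$, and using that non-conjugacy is exactly the injectivity of $d\exp^\mu_{x_0}$ there, the equation $J(\tau_0)=c\gamma_{v_0}'(\tau_0)$ forces $\tau_0\theta'(0)=cv_0$; pairing with $v_0$ and using $\theta'(0)\perp v_0$ gives $c=0$ and hence $\theta'(0)=0$. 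The main obstacle is confined to this Jacobi-field/exponential-map argument; the remainder is bookkeeping that assembles Lemma \ref{potatbo} with formula \eqref{rhoder}.
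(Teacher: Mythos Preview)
Your argument is sound when the exit vector $w_0=\gamma_{v_0}'(\tau_0)$ at $y_0$ is transverse to $\partial M$, and in that regime it amounts to the same mechanism as the paper's proof, just run forward from $x_0$ rather than backward from $y_0$. The difficulty is that you explicitly invoke ``transversality of $\gamma_{v_0}'(\tau_0)$ to $T_{y_0}\partial M$,'' and this is \emph{not} among the hypotheses: only $v_0$ is assumed strictly inward, while at the first boundary intersection one has $\langle w_0,\nu\rangle_g\le 0$ with equality (tangency) a genuine possibility. In the tangent case your first-exit map $F$ may fail to satisfy $F(x_0,v_0)=y_0$ at all (the geodesic can touch $\partial M$ at $y_0$ and re-enter $M$, so $\ell(x_0,v_0)>\tau_0$), and even when it does, the travel time $\ell$ is typically not differentiable there, so the Jacobian computation and the Jacobi-field argument collapse.

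The paper circumvents this by reversing direction: it fixes at $y_0$ the set $C_\epsilon\subset S_{y_0}M$ of \emph{strictly outward} vectors close to $w_0$ and, via the scattering relation, sends each $w\in C_\epsilon$ to its entry vector $v$ near $v_0$ and then to the basepoint $x=\pi(v)$. Transversality at the \emph{entry} end is guaranteed by hypothesis, so this map $\kappa$ is smooth, and non-conjugacy makes it a diffeomorphism onto its image $\mathcal H\subset\partial M$. When $w_0$ is tangent, $C_\epsilon$ is only a half-cone and $\mathcal H$ only a half-neighborhood of $x_0$; this is not a defect but a feature, since the subsequent proof of Theorem~\ref{boundary determination} is organized precisely around these (half-)neighborhoods $\mathcal H_m$. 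Your forward approach would have to be rewritten to produce the same half-neighborhood conclusion in the tangent case, which effectively forces you back to the paper's construction.
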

\begin{proof}
The existence of $y_0$ is guaranteed by the fact that $v_0$ is inward
pointing and that our systems are non-trapping.  Let $\tau>0$ denote
the travel time of the magnetic geodesic from $x_0$ to $y_0$. Then
$\gamma_{v_0}(\tau)=y_0$. Let $w_0=\gamma_{v_0}'(\tau)$, the vector
tangent to the curve at $y_0$.  It should be noted that both $y_0$ and
$w_0$ are determined by the scattering relation.  For small
$\epsilon>0$, let
\[C_\epsilon=\{w\in S_{y_0}M: \langle
w,w_0\rangle_g>1-\epsilon,\,\langle w,\nu\rangle_g<0\}.\]
Since $y_0$ is the first point of intersection of $\gamma_{v_0}$ with $\p M$, it
is clear that $\langle w_0,\nu\rangle_g\leq 0$ with equality if an
only if $w_0$ is tangent to $\p M$.  Therefore, $C_\epsilon$ is
either an open cone or a half-open cone about $w_0$, depending on
whether $w_0$ is tangent to $\p M$.  Note that in the case that it
is only half-open, $w_0$ is a limit point, but not actually a member
of $C_\epsilon$.

The elements of $C_\epsilon$ are strictly outward pointing, so they
are terminal vectors of magnetic geodesics.  Since $v_0$ is
transverse to $\p M$, we see that $\{(v,w)\in\Sigma:w\in
C_{\epsilon}\}$ gives a one-to-one correspondence between vectors
$v$ near $v_0$ with vectors $w\in C_\epsilon$.  Let $\kappa$ denote
the composition of the corresponding maps
\begin{align*}
C_\epsilon &\rightarrow \p SM \rightarrow \p M, \\
w &\mapsto v  \mapsto \pi(v),
\end{align*}
where $\pi(v)$ is projection to the base point.  Since $x_0$ and $y_0$
are not conjugate, $\kappa$ is a diffeomorphism. Also, it is
determined by the scattering data.  Its image, which we will denote
by $\mathcal{H}$, is either a half-neighborhood or full neighborhood
of $x_0$ depending on whether $C_\epsilon$ is a half-cone or full
cone about $w_0$.

The function $\rho(x,y_0)$ is well-defined for $x\in M'$ in an open
neighborhood about $x_0$.  We may assume that $\mathcal{H}$ is taken
small enough to lie inside this neighborhood.  Then from the
previous discussion, we see that for $x\in \mathcal{H}\subset \p M$,
there exists a unique vector $v_x\in S_xM$, close to $v_0$,
such that $y$ lies on $\gamma_{v_x}$. Indeed $v_x$ is a point in the
image of the map ${C_\epsilon}\rightarrow \p SM$. By equation
\eqref{rhoder},
\begin{align*}
\nabla_x\rho(x,y_0)=\hat\zeta_x-v_x.
\end{align*}
The terms on the right are uniquely determined, since $v_x$ is known
from the scattering data, and $\hat{\zeta}$ is known on $\p M$.
Therefore, since $g|_{\p M}$ is known, we find $d_x\rho(x,y_0)$ at
points $x\in\mathcal{H}$ by lowering the index of
$\nabla_x\rho(x,y_0)$.
\end{proof}

\begin{proof}[Proof of Theorem \ref{boundary determination}]
Let $v_0\in S\p M$ satisfy condition A. By lemma \ref{openA}, the
vector
\begin{align}\label{vee}
v_s(x_0)=\cos(s)v_0+\sin(s)\nu
\end{align}
will also satisfy condition A for suffiently small $s\geq 0$. We let
$y_s$ denote the first point of intersection between
$\gamma_{v_s(x_0)}$ and $\p M$, and we let $w_s$ be the tangent to the
magnetic geodesic at $y_s$. By compactness, there is a sequence
$s_m\rightarrow 0$ such that $(y_{s_m},w_{s_m})$ converges to a vector
$(y_*,w_*)$ tangent to $\gamma_{v_0}$ and lying over a point of $\p M$.
We split the proof of the theorem into the two possible cases that
$w_*=v_0$ or $w_*\neq v_0$ which correspond to whether the length of
the magnetic geodesic $\gamma_{v_0}$ from $x_0$ to $y_*$ has length $0$ or not.

{\bf Case I}: $w_*\neq v_0$.

Let $\rho_m(x)$ denote the function $\rho(x,y_{s_m})$.  Let
$\mathcal{H}_m$ be the corresponding (half)-neighborhood of $x_0$
referred to in the previous lemma.  For $x\in M'$ nearby $x_0$, let
$v_{s_m}(x)$ be the initial vector of the magnetic geodesic
connecting $x$ to $y_{s_m}$, and let $\hat{v}_{s_m}$ be the
corresponding covector.
\begin{align*}
v_{s_m}(x)=\hat\zeta_x-\nabla_x\rho_m(x) \\
\hat{v}_{s_m}(x)=\zeta_x-d_x\rho_m(x).
\end{align*}
For $x\in\mathcal{H}_m$, both $v_{s_m}(x)$ and $\hat{v}_{s_m}(x)$ are uniquely
determined by the scattering data.

We make the following conventions of notation. Near $x_0$ our
boundary normal coordinates shall be
\begin{align*}
\{(x^\alpha,x^n):\,\,\alpha=1,\dots,n-1\}.
\end{align*}
A subscript or superscript of $\alpha$ or $n$ shall mean the
corresponding tensor component: e.g.
$\p_n=\p_{x^n},\,\zeta_\alpha=\zeta(\p_\alpha),$ etc. By equation
\eqref{eikonal}, we have in boundary normal coordinates:
\begin{align*}
g^{\alpha\beta}(\zeta_\alpha-\p_{\alpha}\rho_m)(\zeta_{\beta}-\p_{\beta}\rho_m)
+(\theta_n-\p_n\rho_m)^2=1,
\end{align*}
or
\begin{align*}
g^{\alpha\beta}(\hat{v}_{s_m})_\alpha(\hat{v}_{s_m})_\beta+(\hat{v}_{s_m})_n^2=1.
\end{align*}
This equation is valid for all $x$ which are sufficiently close to
$x_0$ in the extended manifold $M'$. For $x\in \mathcal{H}_m$, the
coefficients of the equation are known.  We shall find the normal
derivatives of $g$ and $\zeta$ by successively applying $\p_n$ to
this equation and solving for all unknown terms at each step.

For the first step, we apply $\p_n$ and obtain the equation:
\begin{align}\label{pn}
(\p_ng^{\alpha\beta})(\zeta_\alpha-\p_{\alpha}\rho_m)(\zeta_{\beta}-\p_{\beta}\rho_m)
+2g^{\alpha\beta}(\p_n\zeta_\alpha-\p_n\p_{\alpha}\rho_m)(\zeta_{\beta}-\p_{\beta}\rho_m)
&
\\ + 2(\zeta_n-\p_n\rho_m)(\p_n\zeta_n-\p_n^2\rho_m) & =0.\notag
\end{align}
Splitting the second term and rearranging yields
\begin{align}\label{iterate1}
(\p_ng^{\alpha\beta})(\hat{v}_{s_m})_\alpha(\hat{v}_{s_m})_\beta
&+2(g^{\alpha\beta}\p_n\zeta_\alpha)(\hat{v}_{s_m})_\beta=\\
&2g^{\alpha\beta}(\hat{v}_{s_m})_\beta\p_n\p_\alpha\rho_m
-2(\hat{v}_{s_m})_n(\p_n\zeta_n-\p_n^2\rho_m).\notag
\end{align}
Next, we let $m\rightarrow \infty$.  On the left side we obtain:
\begin{align*}
(\p_ng^{\alpha\beta})(\hat{v}_0)_\alpha(\hat{v}_0)_\beta+
2(g^{\alpha\beta}\p_n\zeta_\alpha)(\hat{v}_0)_\beta.
\end{align*}
For fixed $x=x_0$ and variable $\hat{v}\in S_{x_0}^*\p M$, this
expression can be regarded as a non-homogeneous quadratic function
$F(\hat{v})$. We wish to show that at $x=x_0$, the value
$F(\hat{v}_0)$ is uniquely determined.  The fact that condition A is
open would then imply that the values $F(\hat{v})$ are uniquely
determined for all unit-length $\hat{v}$ in an open cone.  But that
implies that the coefficients of $F$ are uniquely determined.  Hence
$\p_ng^{\alpha\beta}(x_0)$ and $\p_n\zeta_\alpha(x_0)$ would be
uniquely determined.  By again appealing to the fact that condition
A is open in $\p SM$, we could conclude that $\p_ng^{\alpha\beta}$
and $\p_n\zeta_\alpha$ are in fact uniquely determined in a
neighborhood of $x_0$ in $\p M$.

To show that $F(\hat{v}_0)$ is uniquely determined, we consider the
right side of \eqref{iterate1} at $x=x_0$ as $m\rightarrow \infty$.
In the second term, $\p_n\zeta_n$ is independent of $m$ and
\begin{align}\label{bounded}
\p_n^2\rho_m(x_0)=\p_n^2\rho(x_0,y_{s_m})\rightarrow
\p_n^2\rho(x_0,y_*).
\end{align}
Since $(\hat{v}_{s_m})_n=\sin({s_m})\rightarrow 0$, we conclude that
the limit of the second term is $0$:
\begin{align}\label{disappear}
2(\hat{v}_{s_m})_n(\p_n\zeta_n-\p_n^2\rho_{s_m})\rightarrow 0.
\end{align}
It only remains to show that the limit of the first term of the
righthand side is uniquely determined at $x_0$. For each $m$,
$\p_n\rho_m(x)$ is known in $\mathcal{H}_m$.  Hence
$\p_n\p_\alpha\rho_m=\p_{\alpha}\p_n\rho_m$ is also known in
$\mathcal{H}_m$ and in particular at $x_0$ and in the limit
$m\rightarrow \infty$.  Thus, $\p_ng^{\alpha\beta}$ and $\p_n
\zeta_{\alpha}$ are uniquely determined for $x\in \p M$ near $x_0$.

Before calculating higher derivatives, we return to equation
\eqref{iterate1}. All coefficients except those in the far right
hand term are known in $x\in \mathcal{H}_m$. Therefore, by solving
the equation, we find that $\p_n\zeta_n-\p_n^2\rho_{s_m}$ is
determined in the same set. $\p_n\zeta_n$ is known because $\zeta_n$
is known in a neighborhood of $\p M$. So we find that
$\p_n^2\rho_{s_m}(x)$ is also determined in $\mathcal{H}_m$.  Hence
all coefficients of \eqref{iterate1} are in fact uniquely determined
in $\mathcal{H}_m$.

To recover higher order normal derivatives of $g$ and $\zeta$, we
continue this process, applying $\p_n$ to equation \eqref{pn} to
obtain
\begin{align}\label{pnpn}
(\p_n^2g^{\alpha\beta})(\hat{v}_{s_m})_\alpha(\hat{v}_{s_m})_\beta+
4(\p_ng^{\alpha\beta})\p_n(\hat{v}_{s_m})_\alpha(\hat{v}_{s_m})_\beta+
\\
2g^{\alpha\beta}(\p_n^2\zeta_\alpha-\p_n^2\p_\alpha\rho_m)(\hat{v}_{s_m})_\beta+
2g^{\alpha\beta}\p_n(\hat{v}_{s_m})_\alpha\p_n(\hat{v}_{s_m})_\beta+
\notag \\
2(\p_n(\hat{v}_{s_m})_n)^2+2(\hat{v}_{s_m})_n(\p_n^2\zeta_n-\p_n^3\rho_m)=0\notag.
\end{align}
The second, fourth, and fifth terms in this sum have already been
shown to be determined in $\mathcal{H}_m$.  Therefore, so is the
combined sum of the other terms:
\begin{align*}
\p_n^2g^{\alpha\beta}(\hat{v}_{s_m})_\alpha(\hat{v}_{s_m})_\beta+
2g^{\alpha\beta}(\p_n^2\zeta_\alpha-\p_n^2\p_\alpha\rho_m)(\hat{v}_{s_m})_\beta
\\ + 2(\hat{v}_{s_m})_n(\p_n^2\zeta_n-\p_n^3\rho_m).
\end{align*}
We have already shown that $\p_n^2\rho_m$ is determined in
$\mathcal{H}_m$. Therefore, by differentiation, so is
$\p_n^2\p_\alpha\rho_m$.  By arguments similar to those of
\eqref{bounded} and \eqref{disappear}, we find that at $x_0$, as
$m\rightarrow \infty$,
\begin{align*}
2(\hat{v}_{s_m})_n(\p_n^2\zeta_n-\p_n^3\rho_m)\rightarrow
0.
\end{align*}
Therefore, we find that the following quantity is determined at
$x_0$:
\begin{align*}
\p_n^2g^{\alpha\beta}(\hat{v}_0)_\alpha(\hat{v}_0)_\beta+
(2g^{\alpha\beta}\p_n^2\zeta_\alpha)(\hat{v}_0)_\beta.
\end{align*}
As in the previous iteration, we regard this expression as a
quadratic function in $v$.  By appealing to the open nature of
condition A, we find that the coefficients must be uniquely
determined.  This is true at $x_0$, but again since condition A is
open, it is also true in a neighborhood of $x_0$.  Hence
$\p_n^2g^{\alpha\beta}$ and $\p_n^2\theta_\alpha$ are uniquely
determined on the boundary near $x_0$.

Returning to equation \eqref{pnpn}, we now find that all the
remaining unknown terms are in fact uniquely determined in
$\mathcal{H}_m$. In particular, since $\p_n^2\zeta_n$ was already
uniquely determined by lemma \ref{potatbo}, $\p_n^3\rho_m$ must be
determined in $\mathcal{H}_m$.

To determine the higher order derivatives, we successively apply
$\p_n$ to equation \eqref{pnpn}.  At each stage, after discarding
all terms with coefficients that are known in $\mathcal{H}_m$, we
obtain
\begin{align*}
\p_n^{k}g^{\alpha\beta}(\hat{v}_{s_m})_\alpha(\hat{v}_{s_m})_\beta+
2g^{\alpha\beta}(\p_n^{k}\zeta_\alpha-\p_n^{k}\p_\alpha\rho_m)(\hat{v}_{s_m})_\beta
\\ + 2(\hat{v}_{s_m})_n(\p_n^{k}\zeta_n-\p_n^{k+1}\rho_m).
\end{align*}
Following \eqref{bounded} and \eqref{disappear}, we find that at
$x_0$ the third term goes to zero as $m\rightarrow \infty$.  From
the last step in the previous iteration, $\p_n^k\rho_m$ is known in
$\mathcal{H}_m$.  Therefore, by differentiation,
$\p_n^k\p_\alpha\rho_m$ is known at $x_0$ in the limit as
$m\rightarrow \infty$.  The remaining two terms are then regarded as
a quadratic function in $\hat{v}$ and the coefficients
$\p_n^kg^{\alpha\beta}$ and $\p_n^k\zeta_\alpha$ are recovered as in
the previous iterations. This allows us to go back and determine the
coefficients of the third term including $\p_n^{k+1}\rho_m$.  Hence
we find that the full jets of $g$ and $\zeta$ are completely
determined at $x_0$ and even in some small neighborhood about $x_0$.
Since $\Omega=d\zeta$, the same is true for the jet of $\Omega$.

The assumption that $w_*\neq v_0$ was used implicitly in
\eqref{bounded}. In the case that $w_*=v_0$, the argument falls
apart because the length of the curve approaches $0$ as $s\rightarrow 0$
and, consequently, $\p_n^2\rho(x_0,y_{s_m})$ is not uniformly bounded
with respect to $m$.  Hence \eqref{disappear} does not necessarily
hold and the quadratic function $F(\hat{v})$ cannot be isolated.

{\bf Case II}: $w_*=v_0$

\begin{lemma}
For $s\geq 0$ sufficiently small, $\rho(x_0,y_s)$ is uniquely
determined by the scattering data.
\end{lemma}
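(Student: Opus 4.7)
The plan is to treat $f(s):=\rho(x_0,y_s)$ as a function of the single parameter $s>0$ and reconstruct it by integrating its $s$-derivative from the initial condition $f(0^+)=0$ that the Case II degeneracy forces.

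First I would show that $f$ extends continuously to $s=0$ with $f(0)=0$. Case II is by definition the case in which the length of the limiting segment of $\gamma_{v_0}$ from $x_0$ to $y_*$ vanishes; consequently the travel time $\tau_s$ of the magnetic geodesic from $x_0$ to $y_s$ tends to zero as $s\to 0^+$. Since $\rho(x_0,y_s)=\tau_s-\int_{\ti\gamma_{v_s,\tau_s}}\zeta$ and the geodesics are unit speed, one gets the bound $|f(s)|\le(1+\|\zeta\|_\infty)\tau_s\to 0$.

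Next I would differentiate. Specializing the first variation formula \eqref{1stvar} to variations that fix the initial endpoint and move only the terminal endpoint yields $d_y\rho(x_0,y)=\langle w(y),\cdot\rangle_g-\zeta|_y$, where $w(y)$ is the terminal tangent of the magnetic geodesic from $x_0$ to $y$. Composing with the curve $s\mapsto y_s\in\p M$ and applying the chain rule produces
\[f'(s)=\langle w_s,\dot y_s\rangle_g-\langle\zeta|_{y_s},\dot y_s\rangle.\]
Every ingredient on the right is determined by the scattering data: the pair $(y_s,w_s)$ is the image under $\ti\Sigma$ of the inward boundary vector $v_s(x_0)$ from \eqref{vee}, so $y_s$, $\dot y_s$, and $w_s$ are all read off from the scattering relation; the boundary metric $\iota^*g$ is given; and since $\dot y_s\in T\p M$, only $\iota^*\zeta$ is actually needed, which is already pinned down by Lemma \ref{potatbo}.

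Finally, to recover $f$ itself I would integrate. The bound $|f'(\sigma)|\le(1+\|\zeta\|_\infty)|\dot y_\sigma|_g$, together with the fact that $\int_0^{s}|\dot y_\sigma|_g\,d\sigma$ is just the arc length of $\sigma\mapsto y_\sigma$ in $\p M$, shows $f'\in L^1(0,s]$. Combined with continuity of $f$ at $0$ and smoothness of $f$ on $(0,s]$, absolute continuity yields $f(s)=\int_0^s f'(\sigma)\,d\sigma$, a quantity determined purely by the scattering data. The step I expect to need the most care is precisely this regularity of $f$ at the degenerate endpoint $s=0$; here analyticity of the magnetic system quietly does its work, ensuring that $s\mapsto y_s$ does not behave pathologically as the geodesic collapses and that the hypotheses of the fundamental theorem of calculus are genuinely met.
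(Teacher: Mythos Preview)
Your approach is essentially the paper's: compute $f'(s)$ via the first variation formula, observe that the right-hand side is scattering-determined, and integrate from the boundary condition $f(0^+)=0$ forced by Case~II. The one place where the paper does real work that you only gesture at is the smoothness of $s\mapsto y_s$ on an interval $(0,\delta)$; without it neither $\dot y_s$ nor your $L^1$ bound makes sense. The paper's argument is concrete: expand the magnetic convexity quantity $II_{x_0}(v_0)-\langle Y(x_0)v_0,\nu\rangle$ in a Taylor series along $\gamma_{v_0}$, note that analyticity plus non-trapping forbids all derivatives from vanishing (otherwise $\gamma_{v_0}$ would lie in $\p M$ forever), and check that if the first nonzero derivative were negative the travel time to $y_s$ would be bounded away from zero, contradicting $(y_{s_m},w_{s_m})\to(x_0,v_0)$. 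Once that is in place, your sketch and the paper's proof coincide.
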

\begin{proof}
Recall, for small $s$, $y_s$ is the first intersection of
$\gamma_{v_s}$ with $\p M$.  Let $J$ be a small interval
$(0,\delta)$.  We shall show that if $\delta$ is sufficiently small,
then $y_s$ is a smooth curve in $\p M$ for $s\in J$. This will hold
for example if $\p M$ is strictly convex with respect to magnetic
geodesics in the direction of $v_0$.  Strict convexity holds when
\begin{align}\label{convex}
II_{x_0}(v_0)-Y(x_0)v_0
\end{align}
is positive, where $II$ denotes the second fundamental form of $\p
M$ with respect to $g$, and $Y$ denotes the $(1,1)$-tensor
determined by the magnetic field.  However, strict convexity is not
necessary. By Taylor's formula, it suffices to show that the first
nonzero derivative of \eqref{convex} in the direction of $v_0$ is
positive. Note that non-trapping and analyticity preclude the
possibility that all derivatives be zero, for analytic continuation
would then imply that the magnetic geodesic $\gamma_{v_0}$ is
trapped in $\p M$.  If the first nonzero derivative of
\eqref{convex} were negative, then the travel time necessary to
reach $y_s$ from $x_0$ would be bounded from below.  Hence it would
be impossible for a sequence $(y_{s},w_{s})$ to approach $(x_0,v_0)$ as
$s\rightarrow 0$ without $\gamma_{v_0}$ being trapped.

We conclude that $y_s$ is a smooth curve in $\p M$ for $s\in J$.
Therefore, $\rho(x_0,y_s)$ is a smooth function on $J$. By equation
\eqref{rhoder},
\begin{align*}
\frac{d}{ds}\rho(x_0,y_s)=\langle
w_s,\p_s{y_s}\rangle_g-\langle\zeta_{y_s},\p_s{y_s}\rangle,
\end{align*}
where $w_s$ is the vector tangent to $\gamma_{v_s}$ at $y_s$. All
terms on the right side are determined by the scattering data,
therefore, by integration, $\rho(x_0,y_s)$ is determined up to a
constant.  By continuity at $s=0$, we find that the constant must be
$0$, since $\lim\limits_{s\rightarrow 0}\rho(x_0,y_s)=0$.
\end{proof}
On $M_i'$ for $i=1,2$, consider the quadratic function
$F_i(x)v=|v|_{g_i}^2-\langle\zeta_{i}(x),v\rangle$.  Omitting the
subscript $i$, in boundary normal coordinates $F_i$ takes the form:
\begin{align*}
F_i(x)v=g_{\alpha\beta}(x)v^\alpha
v^\beta+(v^n)^2-\zeta_\alpha(x)v^\alpha-\zeta_n(x)v^n.
\end{align*}
$(g_1,\zeta_1)$ and $(g_2,\zeta_2)$ have the same jet at $x_0$ if
and only if
\begin{align}\label{equaljets}
\p_n^kF_1(x_0)v=\p_n^kF_2(x_0)v
\end{align}
for all $k>0$ and all $v$ in a small conic neighborhood of $v_0$ in
$S_{x_0}\p M$.

Suppose equation \eqref{equaljets} fails for some $v_1\in S_{x_0}\p
M$ close to $v_0$. Then by considering the Taylor expansion, there
exists a neighborhood of $(x_0,v)$ for which $F_1(x)v>F_2(x)v$ for
all $x$ with $x^n>0$. Let $(\gamma_{v_s})_i$ denote the magnetic
geodesic in $M_i'$ with initial vector $v_s=\cos(s)v_1+\sin(s)\p_n$.
Then we have
\begin{align*}
\rho_1(x_0,y_s)=\int_{(\gamma_{v_s})_1}F_1(x)v>\int_{(\gamma_{v_s})_1}F_2(x)v\geq\\
\int_{(\gamma_{v_s})_2}F_2(x)v=\rho_2(x_0,y_s).
\end{align*}
The second inequality follows from the fact that $(\gamma_{v_s})_2$
is a minimizing curve for the action functional defined by $F_2$.
This contradicts the fact that $\rho_1(x_0,y_s)=\rho_2(x_0,y_s)$.
\end{proof}
\noindent {\em Remark} :  The analytic assumption was only used in
Case II and only to establish a weak convexity condition. Therefore,
this theorem may be applied to nonanalytic systems where that
condition is added to the hypothesis.

\subsection{Recovering the magnetic system in a band about $\p M$}
The mapping $\phi_0:V_1\rightarrow V_2$ given by \eqref{phi0} is a
magnetic equivalence if $(g_1,\Omega_1)$ and $(g_2,\Omega_2)$ have
the same coefficients when expressed in their respective boundary
normal coordinates.  By Theorem \ref{boundary determination} and
condition A on the first magnetic system, the respective
coefficients have the same jets at least at one point in each component
of $\p M$.  By analytic continuation the coefficients must be equal
on all of $\p M\times (-\epsilon,\epsilon)$.  We conclude
$\phi_0^*g_2=g_1$ and $\phi_0^*\Omega_2=\Omega_1$.

\section{Theorem \ref{length recovery}: Recovery of the travel time data}

\subsection{Jacobi fields and conjugate points}

Given a magnetic geodesic $\gamma : [0, T ]\to M$, let $\mathcal{A}$ be the operator on smooth vector fields $Z$ along $\gamma$
defined by
\[\mathcal{A}(Z) = Z''+R(\gamma',Z)\gamma'-Y(Z')-(\nabla_ZY)(\gamma').\]

A vector field $J$ along $\gamma$ is said to be a \emph{magnetic Jacobi field}
if it satisfies the equations
\begin{equation}\label{A=0}\mathcal{A}(J) = 0
\end{equation}
and
\begin{equation}\label{J'gamma=0}
\lan J',\gamma'\ran =  0.
\end{equation}

A magnetic Jacobi field along a magnetic geodesic $\gamma$ is uniquely
determined by specifying $J$ and $J'$ at a point. To see this, consider
the orthonormal basis defined by extending an orthonormal basis $e_1,\dots,e_n$
at $\gamma(0)$ by requiring that
\begin{equation}\label{ON extension}
 e_i'=Y(e_i)
\end{equation}
along $\gamma$. This extension gives an orthonormal basis at each point since
\[ \frac{d}{dt}\lan e_i,e_j \ran=\lan Y(e_i),e_j \ran+ \lan e_i,Y(e_j) \ran =0.\]
Using this basis,
$ J=\sum f_ie_i $
and we can write equation \ref{A=0} as the system
\[f_j'' + \sum_{i=1}^n f_i'y_{ij} + \sum_{i=1}^n f_i a_{ij}=0\]
where $y_{ij}=\lan Y(e_i),e_j\ran$ and
\[ a_{ij}= \lan \nabla_{\gamma'}Y(e_i)+
R(\gamma',e_i)\gamma'-Y(Y(e_i))-(\nabla_{e_i}Y)(\gamma') ,e_j\ran. \]
This is a linear second order system, and therefore it has a unique solution
for each set of initial conditions. Moreover, since the metric and magnetic fields are analytic, so will be the solutions.

It is easy to see from  the definition that $\gamma'$ is always a magnetic Jacobi field. Unlike the case of straight
geodesics, this is the only magnetic Jacobi field parallel to $\gamma'$.
Another difference from the straight geodesic case is that magnetic Jacobi
fields that are perpendicular to $\gamma'$ at $t=0$ do not stay perpendicular
for all $t$. For this reason we will sometimes consider instead the orthogonal
projection $J^\perp =J-f\gamma'$ where $f=\lan J,\gamma'\ran$. The parallel component
$f\gamma'$ is uniquely determined by $J^\perp$ and
$J(0)$ since
\[ f'= \lan J',\gamma'\ran+\lan J,\gamma''\ran= \lan J, Y(\gamma') \ran= \lan J^\perp,
Y(\gamma') \ran. \]

Magnetic Jacobi fields are the variational fields corresponding to variations
through magnetic geodesics. This can be seen by considering the variation
\[ f(t,s)= \gamma_s(t)= exp^\mu_{\tau(s)}(t\theta(s)) \]
where $\tau(s)$ is any curve with $\tau'(0)=J(0)$ and $\theta(s)$ is a vector
field along $\tau$ with $\theta(0)=\gamma'(0)$ and $\theta'(0)=J'(0)$. It is a straight forward computation to check that the variational field $\frac{\partial f}{\partial s}(t,0)$ is a magnetic Jacobi field (see \cite{He}).\\

There is a close relation between magnetic Jacobi fields and conjugate points, analogous to the straight geodesic case. This can be summarized in the following Proposition, proved in \cite{He}.

\begin{prop}
Let $\gamma_\theta:[0,T]\to M$ be the magnetic geodesic with $\gamma(0)=x$. The point $p=\gamma(t_0)$ is conjugate to $x$ along $\gamma$
if and only if there exist a magnetic Jacobi field $J$ along $\gamma$, not
identically zero, with $J(0)=0$ and $J(t_0)$ parallel to $\gamma'$.

Moreover, the multiplicity of $p$ as a conjugate point is equal to the number
of linearly independent such Jacobi fields.\\
\end{prop}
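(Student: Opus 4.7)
The plan is to compute the differential $d\exp^\mu_x$ at $t_0\theta\in T_xM$ explicitly in terms of magnetic Jacobi fields along $\gamma=\gamma_\theta$, and then read off both the conjugacy criterion and the multiplicity count from the structure of its kernel.

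For a tangent vector $w\in T_{t_0\theta}(T_xM)\cong T_xM$, decompose $w = a\theta + w^\perp$ with $w^\perp\perp\theta$, and consider the curve $v(s)=t_0\theta+sw$. Setting $r(s)=|v(s)|$ and $\eta(s)=v(s)/r(s)$, a direct calculation gives $r'(0)=a$ and $\eta'(0)=w^\perp/t_0$. Since $\exp^\mu_x(v(s))=\gamma_{x,\eta(s)}(r(s))$, the chain rule yields
\[ d_{t_0\theta}\exp^\mu_x(w) = J(t_0) + a\,\gamma'(t_0), \]
where $J$ is the variational field of the variation $\gamma_{x,\eta(s)}(t)$. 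By the discussion preceding the proposition, $J$ is a magnetic Jacobi field with $J(0)=0$ and $J'(0)=\eta'(0)=w^\perp/t_0$; in particular $\langle J'(0),\gamma'(0)\rangle = 0$, consistent with (\ref{J'gamma=0}).

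With this formula in hand both directions of the equivalence follow immediately. If $p$ is conjugate to $x$, some nonzero $w$ satisfies $J(t_0)=-a\,\gamma'(t_0)$; the case $w^\perp=0$ would force $J\equiv 0$ by uniqueness for the linear second-order equation $\mathcal{A}(J)=0$, hence $a=0$ and $w=0$, a contradiction. So $J$ must be nontrivial with $J(t_0)$ parallel to $\gamma'(t_0)$. Conversely, given a nontrivial such $J$ with $J(t_0)=c\,\gamma'(t_0)$, the same uniqueness statement forces $J'(0)\neq 0$, and then $w:=-c\theta+t_0 J'(0)$ is a nonzero element of $\ker d_{t_0\theta}\exp^\mu_x$.

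For the multiplicity I would let $V$ denote the vector space of magnetic Jacobi fields with $J(0)=0$ and $J(t_0)\parallel\gamma'(t_0)$, and verify that the map $J\mapsto -c(J)\theta+t_0 J'(0)$, with $c(J)$ defined by $J(t_0)=c(J)\gamma'(t_0)$, is a linear isomorphism onto $\ker d_{t_0\theta}\exp^\mu_x$: linearity is immediate from linearity of the Jacobi equation, injectivity is the uniqueness argument just used, and surjectivity is exactly the converse construction above. Hence $\dim\ker d_{t_0\theta}\exp^\mu_x=\dim V$, the number of linearly independent such Jacobi fields. The one point that requires real care is the bookkeeping in the decomposition $w=a\theta+w^\perp$ and the verification that this correspondence is an isomorphism; beyond that the argument is a direct unpacking of the definition of conjugacy via $d\exp^\mu$ together with the already-established correspondence between unit-speed variations through magnetic geodesics and magnetic Jacobi fields.
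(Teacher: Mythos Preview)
Your argument is correct. The paper does not actually supply its own proof of this proposition; it is stated and attributed to \cite{He}. What you have written is the natural approach and is almost certainly the one taken there: compute $d_{t_0\theta}\exp^\mu_x$ by writing $\exp^\mu_x(t_0\theta+sw)=\gamma_{x,\eta(s)}(r(s))$, identify the $s$-derivative as $J(t_0)+a\gamma'(t_0)$ with $J$ the variational Jacobi field satisfying $J(0)=0$, $J'(0)=w^\perp/t_0$, and then read off the kernel. Your isomorphism $J\mapsto -c(J)\theta+t_0J'(0)$ between $V$ and $\ker d_{t_0\theta}\exp^\mu_x$ is exactly right; linearity of $c$ follows from $c(J)=\langle J(t_0),\gamma'(t_0)\rangle$, and injectivity/surjectivity are as you say.

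One small remark: you verify $\langle J'(0),\gamma'(0)\rangle=0$ and call this ``consistent with'' the Jacobi condition $\langle J',\gamma'\rangle=0$, which in the paper's definition is required for all $t$. You are implicitly relying on the paper's assertion that variational fields of unit-speed magnetic variations are magnetic Jacobi fields; alternatively, a short computation using $\mathcal{A}(J)=0$, the antisymmetry of $Y$ and $\nabla Y$, and $\gamma''=Y(\gamma')$ shows that $\langle J',\gamma'\rangle$ is constant along $\gamma$, so checking it at $t=0$ suffices. Either way the gap is already closed by the text preceding the proposition.
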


\subsection{Proof of Theorem \ref{length recovery}}

Consider magnetic geodesics $\gamma_1:[0,T_1]\to M_1$ and $\gamma_2:[0,T_2]\to M_2$  that satisfy condition B and such that $\gamma_2(t)=\phi_0(\gamma_1(t))$ for small $t$. Suppose, moreover, that $\gamma_1$ is never tangent to $\partial M_1$. Since both systems have the same scattering data they must exit the manifold at corresponding points and directions, that is $\gamma_2(T_2)=\phi_0(\gamma_1(T_1))$ and $\gamma_2'(T_2)=\phi_{0*}(\gamma_1'(T_1))$. We will call such geodesics correspondent, and, through $\phi_0$, shall implicitly identify $\gamma_1(0)$ to $\gamma_2(0)$ as well as their neighborhoods and tangent spaces.

We want to compare variations of $\gamma_1$ and $\gamma_2$ by defining for a variation
\[ f_1(t,s)=  exp^\mu_{\tau(s)}(t\theta(s)) \]
of $\gamma_1$ a corresponding variation
\[ f_2(t,s)=  exp^\mu_{\phi_0(\tau(s))}(t\tilde\phi_0(\theta(s))) \]
of $\gamma_2$. Nonetheless, $f_1(T_1,s)$ will be in $V$ for small $s$ and that is enough for our purpose. Therefore $f_1(T_1,s)= f_2 (T(s),s)$ where $T(s):(-\epsilon,\epsilon)\to \R$ is differentiable with $T(0)=T_2$. By taking derivatives with respect to $s$ we see that
\[ \frac{\partial f_1}{\partial s}(T_1,0)= \frac{\partial f_2}{\partial s}(T_2,0) + T'(0)\gamma_2'(T_2), \]
so corresponding Jacobi fields agree at their endpoints up to a term tangent to $\gamma_i'$.

We want to consider all Jacobi fields along $\gamma_1$ with $J(T_1)=0$. For this we define the magnetic Jacobi tensor $\mathcal{J}_1$ with the initial conditions $\mathcal{J}_1(T_1)=0$ and $\mathcal{J}_1'(T_1)=Id$. This is a $(1,1)$ tensor in the space of vectors orthogonal to $\gamma_1'$. For any vector $\eta$ at $\gamma_1(T_1)$ perpendicular to $\gamma_1'$, the vector field $J^\perp(t)= \mathcal{J}_1(\eta)(t)$ is the perpendicular part of the Jacobi field with $J(0)=0$ and  $J'(0)=\eta$.

Correspondingly, define the magnetic Jacobi tensor $\mathcal{J}_2$ with $\mathcal{J}_2(T_2)=0$ and $\mathcal{J}_2'(T_2)=Id$. Since correspondent Jacobi fields agree at their endpoints, up to a parallel component, so will the corresponding Jacobi tensors. Therefore $\mathcal{J}_1(t)=\mathcal{J}_2(t)$ for small $t$. Moreover, this is an analytic function and we can use analytic continuation to see that they must agree for all $t$, as long as both are well defined.

Assume, without loss of generality, that $T_1<T_2$. Then, $\mathcal{J}_1(t)=\mathcal{J}_2(t)$ for $0\leq t \leq T_1$. In particular, at $T_1$ we have $\mathcal{J}_2(T_1)=\mathcal{J}_1(T_1)=0$. which would imply that $\gamma_2(T_1)$ is conjugate to  $\gamma_2(T_2)$, of order $n-1$. Since $\gamma_1$ satisfies condition B this can't be the case, therefore $T_2=T_1$.\\

To see that the length of all magnetic geodesics agree, note that magnetic geodesics that are tangent to $\partial M$ form the boundary of $\mathcal{G}$. In particular they are limit points, and form a set of measure $0$. Therefore, we know that the lengths of $\gamma_1$ and $\gamma_2$ agree for a dense subset of $\mathcal{G}_1$. The length of geodesics is a continuous function on $\mathcal{G}$, so the difference in lengths is also a continuous function that is $0$ in a dense subset. This implies that the lengths agree for all magnetic geodesics.

\subsection{About the necessity of condition B}

\begin{figure}
\psfrag{S2}{$M_1$}
\psfrag{RP}{$M_2$}
\psfrag{g}{$\gamma$}
\hspace{4.3cm}\includegraphics[scale=0.7]{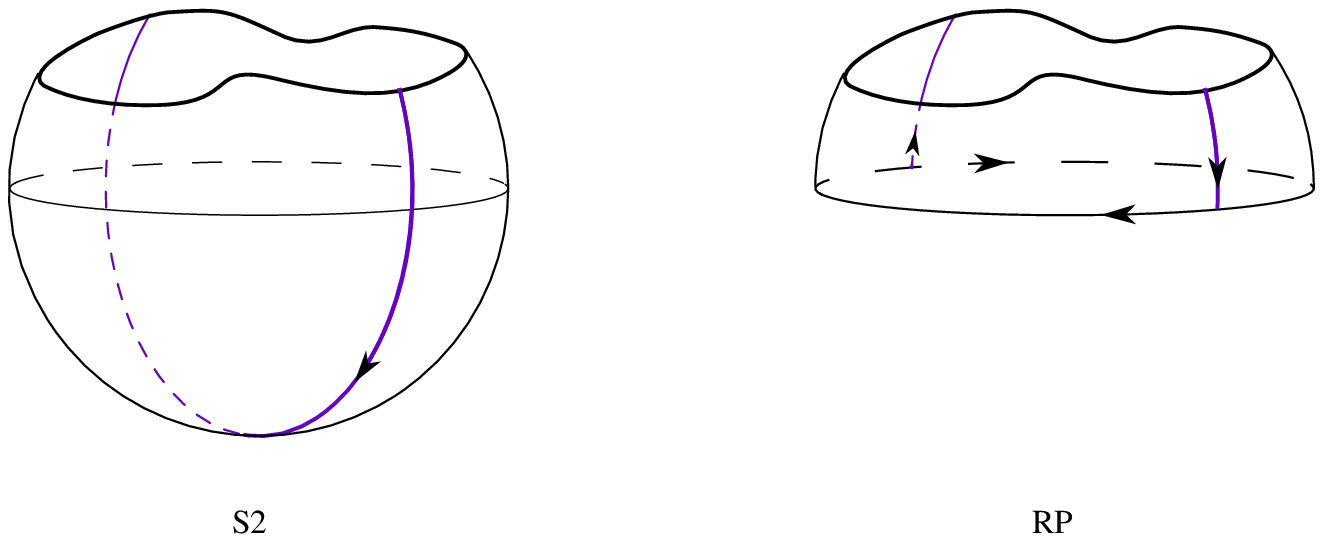}
\caption{} \label{example1Figure}
\end{figure}

In the presence of conjugate points of order $n-1$ the proof above would say that the lengths may differ by the distance between conjugate points of such order. and that $\mathcal{J}_2(T_1-s)=\mathcal{J}_1(T_1-s)=\psi\circ \mathcal{J}_2(T_2-s)$, where $\psi$ is a local isometry of $TM_2$ allowing the possibility that the diffeomorphism $\tilde\phi_0$ does not extend in a compatible way through $\gamma_1$ and $\gamma_2$. So $\mathcal{J}_2$ has a period, up to isometry, of length $T_2-T_1$. \\

In the case of straight geodesics ($\Omega=0$) we can see this behavior by looking at the following examples.

Let $M_1$ be an analytic subset of the sphere $S^2$ that contains the southern hemisphere. Let $M_2$ be a region of $\R P^2$ with the same boundary. We can see it as removing the southern hemisphere from $M_1$ and identifying antipodal points in the equator (See figure \ref{example1Figure}). This manifolds have the same scattering data but any geodesic in $M_1$ that passes through the southern hemisphere will be longer by $\pi$ than the corresponding geodesic in $M_2$. \\

Another similar example is when $N_1$ is a subset of the sphere $S^2$ that contains both poles and a meridian $\tau$. To build $N_2$ we cut open along $\tau$ and glue another $S^2$ opened at the meridian in such a way that whenever you cross $\tau$ you move from one sphere to the other (see figure \ref{example2Figure}). These manifolds have the same scattering data but any geodesic in $N_1$ that passes through $\tau$ will be shorter by $2\pi$ than the corresponding geodesic in $N_2$. This is not a Riemannian manifold, since the poles are singular points. But it is orientable and the Jacobi tensors will be periodic with a period of $2\pi$. \\

The need of condition B is clear in the examples above, where we can see that
the presence of a conjugate point of order $n-1$ allows the length of a geodesic
to change. The question remains if it is necessary as an independent hypothesis.
It seems to be that, like in the examples above, conjugate points of order $n-1$
only appear when there are trapped geodesics. If this is true, it would make condition
B empty in this setting. \\

\begin{figure}
\psfrag{N}{}
\psfrag{S}{}
\psfrag{T}{$\tau$}
\psfrag{g}{$\gamma$}
\psfrag{N1}{$N_1$}
\psfrag{N2}{$N_2$}
\hspace{4.3cm}\includegraphics[scale=0.65]{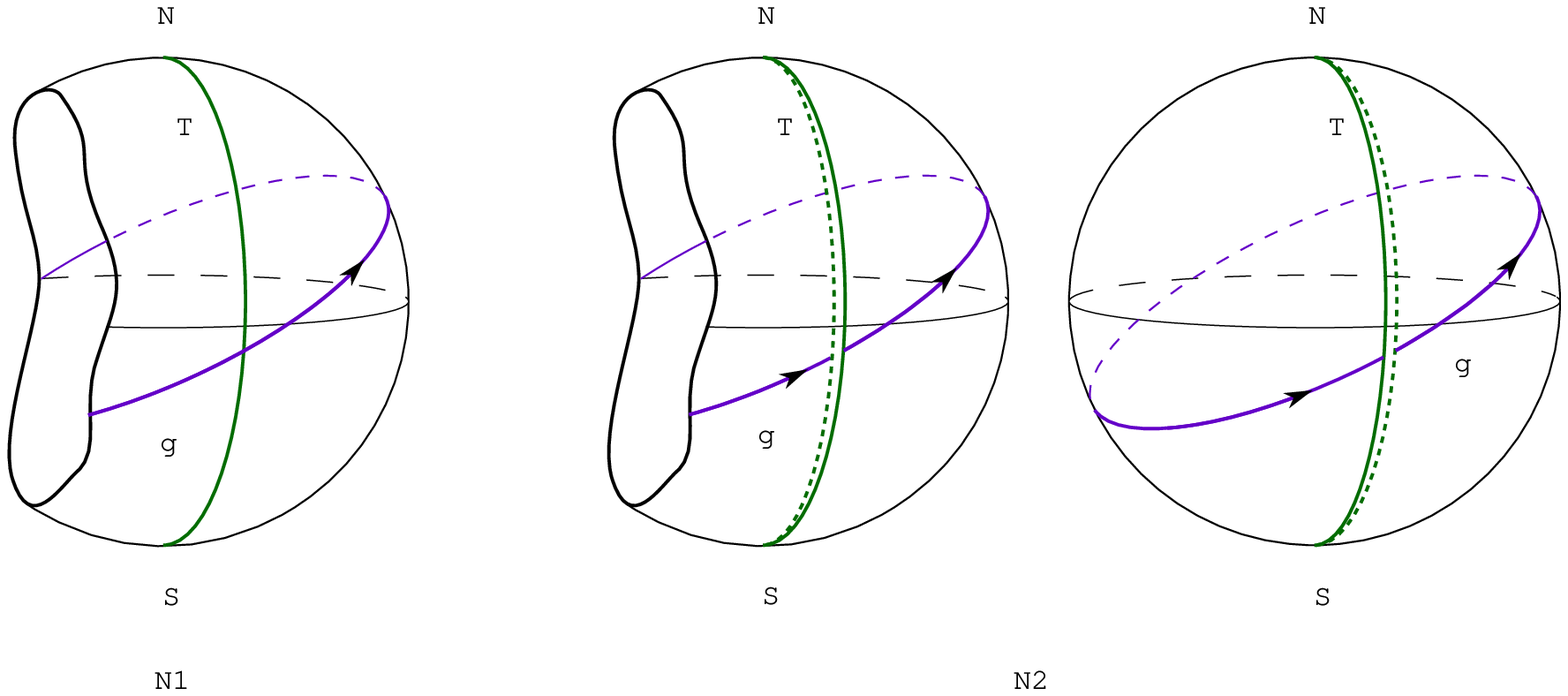}
\caption{} \label{example2Figure}
\end{figure}

\section{Theorem \ref{equivalence construction}: Construction of the magnetic equivalence}
We have two non-trapping, analytic magnetic systems
$(M_i,g_i,\Omega_i)$ with the same scattering data and travel time
data.  Moreover, the mapping $\phi_0:V_1\rightarrow V_2$ is a
magnetic equivalence.  To prove Theorem \ref{equivalence
construction}, we must extend $\phi_0$ to a magnetic equivalence
$\phi:M_1'\rightarrow M_2'$.

This has been done in the absence of magnetic fields in \cite{V}.
The proof carries over in the presence of magnetic fields with only
changes in notation. Therefore, we shall only sketch the proof here.

\subsection{Extension of $\phi_0$ to $M_1'$}
\begin{proof}
First we define a mapping $\ti{\phi}:SM_1\rightarrow M_2$.  We then
show that all vectors lying over the same base point in $M_1$ map to
the same point in $M_2$, and that the induced map
$\phi:M_1\rightarrow M_2$ agrees with $\phi_0$ on their common
domain.

Let $(x,\theta)\in SM_1$, and consider the magnetic geodesic
$\gamma_{x,\theta}(t)$.  Going backwards along $\gamma_{x,\theta}$,
we will eventually leave $M_1$ by the non-trapping assumption. We
define
\begin{align*}
T_0=T_0(x,\theta)=\inf\{t\geq 0:\gamma(x,\theta)(-t)\notin M_1\}.
\end{align*}
$T_0\geq 0$ satisfies $\gamma_{x,\theta}(-T_0)\in \p M$. Moreover,
by the fact that $\p M$ is analytic, it is not hard to see that
$\gamma_{x,\theta}(-T_0-s)\in V_1\setminus M_1$ for all $s$ such
that $0<s<\delta$ for some sufficiently small
$\delta=\delta(x,\theta)$. We arbitrarily choose
\begin{align*}
-T=-T(x,\theta)\in (-T_0-\delta,-T_0),
\end{align*}
and let $(z_{x,\theta},\xi_{x,\theta})\in SM_1'$ be given
by
\begin{align*}
(z_{x,\theta},\xi_{x,\theta})=(\gamma_{x,\theta}(-T),\gamma_{x,\theta}'(-T)).
\end{align*}
Let $(y_{x,\theta},\eta_{x,\theta})\in SM_2'$ be its image by
$\phi_0$,
\begin{align*}
(y_{x,\theta},\eta_{x,\theta})=\phi_{0*}(z_{x,\theta},\xi_{x,\theta}).
\end{align*}
Define
\begin{align*}
\ti{\phi}(x,\theta)=\gamma_{y_{x,\theta},\eta_{x,\theta}}(T).
\end{align*}

\begin{proposition}
$\ti{\phi}$ is a well defined function on $SM_1$ with values in
$M_2$.
\end{proposition}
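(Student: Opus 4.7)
The proposition asserts two things: that the value $\ti\phi(x,\theta)$ is independent of the arbitrary choice of $T\in(T_0,T_0+\delta)$, and that this value lies in $M_2$ rather than merely in $M_2'$. My plan is to handle these in order, using the fact that $\phi_0$ is a magnetic equivalence on $V_1\to V_2$ for the first point, and invoking Theorem \ref{length recovery} together with condition $\hat B$ for the second.

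For independence of $T$, suppose $T<T'$ are both valid choices in $(T_0,T_0+\delta)$. The vectors $(z_T,\xi_T)$ and $(z_{T'},\xi_{T'})$ both lie on the portion of $\gamma_{x,\theta}$ contained in $V_1\setminus M_1$, separated by a forward-time shift of $T'-T$. Because $\phi_0:V_1\to V_2$ is a magnetic equivalence, the induced map on tangent vectors intertwines the two magnetic flows, so $(y_{T'},\eta_{T'})$ is obtained from $(y_T,\eta_T)$ by flowing forward by time $T'-T$ in $M_2'$. Consequently flowing these forward by $T$ and $T'$ respectively lands at the same endpoint of $M_2'$, so $\ti\phi(x,\theta)$ is unambiguously defined.

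For the claim that $\ti\phi(x,\theta)\in M_2$, let $T_+=\inf\{t\ge 0:\gamma_{x,\theta}(t)\notin M_1\}$, so that $\gamma_{x,\theta}|_{[-T_0,T_+]}$ is a complete visit to $M_1$ with boundary endpoints and total travel time $\ell=T_0+T_+$. By Theorem \ref{length recovery}, condition $\hat B$, and a density argument on $\mathcal G$, the magnetic geodesic in $M_2$ that enters at $\phi_0(\gamma_{x,\theta}(-T_0))$ with direction $\phi_{0*}(\gamma'_{x,\theta}(-T_0))$ remains in $M_2$ for time exactly $\ell$. Using the magnetic equivalence $\phi_0$ on the collar once more, the $M_2'$ magnetic flow starting at $(y_{x,\theta},\eta_{x,\theta})$ reaches $\p M_2$ at time $T-T_0$, and then spends a further time $\ell=T_0+T_+\ge T_0$ inside $M_2$. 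Thus at total time $T$ we are still within $M_2$, which yields $\ti\phi(x,\theta)\in M_2$.

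The main obstacle I anticipate is the density/continuity step in the second part. The particular segment $\gamma_{x,\theta}|_{[-T_0,T_+]}$ need not itself satisfy condition B, and it could furthermore be tangent to $\p M_1$ at some interior time. In such cases one either decomposes the visit into maximal $\mathcal G$-subsegments and applies Theorem \ref{length recovery} to each, summing the travel times, or approximates the segment by nearby elements of $\mathcal G$ satisfying condition B (whose density is the content of $\hat B$) and passes to the limit, with non-trapping of both systems ensuring that entry times, exit times, and terminal points depend continuously on the initial vector.
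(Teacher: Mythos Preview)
Your argument is correct and matches the paper's approach: independence of $T$ comes from $\phi_0$ intertwining the two magnetic flows on the collar, and membership in $M_2$ comes from equality of travel times. One simplification worth noting: this Proposition sits inside the proof of Theorem~\ref{equivalence construction}, where equality of the travel time data $\tilde\ell$ is already a \emph{hypothesis}, so there is no need to invoke Theorem~\ref{length recovery} or condition~$\hat B$; and since $\tilde\ell$ is defined for \emph{every} vector in $\partial SM$ (not only those initiating segments of $\mathcal G$), the first-exit-time equality at the entry vector $\gamma'_{x,\theta}(-T_0)$ already yields $\ell_1=\ell_2=T_0+T_+\ge T_0$ directly, rendering the density and decomposition considerations of your final paragraph unnecessary.
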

\begin{proof}
It must be shown that the value of $\ti{\phi}(x,\theta)$ is
independent of the choice of $T\in (-T_0-\delta,-T_0)$, and that the
values do belong to $M_2$. The first statement follows from the fact
that the two magnetic systems are equivalent via $\phi_0$ in $V$.
The second follows from the fact that the two systems have share the
same length data. See \cite{V} for details.
\end{proof}

\begin{proposition}
The value of $\ti{\phi}(x,\theta)$ is independent of the direction
$\theta$.
\end{proposition}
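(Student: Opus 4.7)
The plan is to show that $\ti{\phi}$ is a real-analytic map from $SM_1$ to $M_2'$, to identify a nonempty open subset of $SM_1$ on which $\ti{\phi}(x,\theta)$ is manifestly independent of $\theta$, and then to conclude by analytic continuation in $SM_1$.

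First I would argue that $\ti{\phi}$ is analytic on $SM_1$. Around any $(x_0,\theta_0)\in SM_1$, fix a single admissible $T$; then for all $(x,\theta)$ in a small neighborhood the same $T$ remains admissible and
\begin{align*}
\ti{\phi}(x,\theta)=\pi\bigl(\Phi_2^{T}\circ\phi_{0*}\circ\Phi_1^{-T}\bigr)(x,\theta).
\end{align*}
Each factor is analytic: the magnetic flows $\Phi_i^t$ because the systems $(g_i,\Omega_i)$ are analytic, and $\phi_0$ because by Theorem \ref{boundary determination} the coefficients of $(g_1,\Omega_1)$ and $(g_2,\Omega_2)$ agree in their respective boundary normal coordinates, so $\phi_0$ is an analytic diffeomorphism $V_1\ra V_2$. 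The preceding proposition shows that different admissible choices of $T$ yield the same value, so these local formulas patch into a globally defined analytic map.

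Second I would exhibit a nonempty open set $U\subset SM_1$ on which $\ti{\phi}(x,\theta)=\phi_0(x)$. Take $(x,\theta)$ with $x\in V_1\cap M_1$ sufficiently close to $\p M$ and $\theta$ close to the inward normal, so that there is an admissible $T$ for which the whole backward segment $\gamma_{x,\theta}([-T,0])$ lies in $V_1$. Because $\phi_0$ is a magnetic equivalence on $V_1\ra V_2$, the curve $t\mapsto\phi_0(\gamma_{x,\theta}(t))$ on $[-T,0]$ is a magnetic geodesic in $V_2$; matching it with $\gamma_{y_{x,\theta},\eta_{x,\theta}}$ at $t=-T$ yields $\phi_0(\gamma_{x,\theta}(t))=\gamma_{y_{x,\theta},\eta_{x,\theta}}(t+T)$ throughout, and evaluating at $t=0$ gives $\gamma_{y_{x,\theta},\eta_{x,\theta}}(T)=\phi_0(x)$. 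Hence $\ti{\phi}(x,\theta)=\phi_0(x)$ on $U$, and in particular the vertical derivative of $\ti{\phi}$ along the fibers of $SM_1\ra M_1$ vanishes on $U$.

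Finally I would close by analytic continuation. Assuming $n\geq 2$ and $M_1$ connected, $SM_1$ is a connected real-analytic manifold. The vertical derivative of $\ti{\phi}$ is a real-analytic bundle map that vanishes on the nonempty open set $U$, so it vanishes throughout $SM_1$. Since each fiber $S_xM_1\cong S^{n-1}$ is connected, vanishing of this vertical derivative is equivalent to $\ti{\phi}(x,\theta)$ being independent of $\theta\in S_xM_1$, which is exactly the claim. The main obstacle is the analyticity step: it requires that $\phi_0$ itself be real-analytic, which depends on Theorem \ref{boundary determination} together with analytic continuation along the boundary, and that the magnetic flow depend analytically on its initial data. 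Once both are secured, the collar computation in step two combined with the connectedness of $SM_1$ makes the rest essentially automatic.
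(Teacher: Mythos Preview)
Your strategy is genuinely different from the paper's. The paper works fiber by fiber: for a fixed $x_0$ and $\theta$ near $\theta_0$, it uses the same $T$ to produce two pairs of nearby geodesics in $M_1'$ and $M_2'$ and compares them via the squared-distance functions $\rho_i(t)=d_{g_i}^2(\gamma_{z_0,\xi_0}(t),\gamma_{z,\xi}(t))$. These are analytic in the single real variable $t$ by Lemma~\ref{dsquared}, agree for small $t$ because $\phi_0$ is an isometry on $V$, and hence agree up to $t=T$ by analytic continuation, forcing $\rho_2(T)=\rho_1(T)=0$. This never requires $\ti{\phi}$ itself to be analytic and does not need $SM_1$ to be connected. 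Your global approach---show $\ti{\phi}$ is analytic on all of $SM_1$, identify it with $\phi_0\circ\pi$ on a collar, and propagate the vanishing vertical derivative by analytic continuation on $SM_1$---is conceptually clean but rests entirely on the analyticity step.

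That step has a real gap. You assert that an admissible $T$ for $(x_0,\theta_0)$ remains admissible on a neighborhood, but this can fail: if the backward geodesic $\gamma_{x_0,\theta_0}(-t)$ is tangent to $\p M$ from the inside at some $t_1<T_0(x_0,\theta_0)$, an arbitrarily small perturbation can make it exit $M_1$ near $-t_1$, so $T_0$ drops and your $T$ falls outside the admissible window $(T_0,T_0+\delta)$. The paper flags exactly this phenomenon (``the curve $\gamma_{x_0,\theta}$ can pass across the boundary in and out of $M_1$'') and repairs it by invoking the equality of scattering and travel-time data to show that the formula $\pi\circ\Phi_2^{T}\circ\phi_{0*}\circ\Phi_1^{-T}$ still computes $\ti{\phi}$ even when $T$ is not the first-exit time. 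The ``preceding proposition'' you cite only says that different admissible $T$'s at a fixed point agree; it does not give persistence under perturbation. With that extra argument supplied your route does go through (granted $M_1$ connected and $n\ge 2$), but as written the gap sits precisely where the substance of the proof lies.
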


\begin{lemma}\label{dsquared}
Let $M'$ be an open manifold with analytic metric $g$.  Then for
every $x_0\in M'$, there exists a positive number $r$ such that the
squared distance function is analytic on the set
\[\Delta_r(x_0)=\{(x,y):d(x,x_0)<r, d(x,y)<r\}.\]
If $K$ is a compact set contained within the interior of $M'$, then
there is an open $O\subset M'$ containing $K$ and a positive number
$r$ such that the squared distance function is analytic on the set
\begin{align*}
\Delta_{O,r}(K)=\{(x,y):x\in O, d(x,y)<r\}.
\end{align*}
\end{lemma}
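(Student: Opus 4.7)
The plan is to exploit the analyticity of the exponential map, combined with standard facts about totally normal neighborhoods. Since $g$ is real-analytic, the geodesic equations form a real-analytic ODE system, so the exponential map $\exp:\mathcal{E}\to M'$ is real-analytic, where $\mathcal{E}\subset TM'$ is its (open) domain of definition, an open neighborhood of the zero section. I define the analytic map $E:\mathcal{E}\to M'\times M'$ by $E(x,v)=(x,\exp_x(v))$; at $(x_0,0)$ its differential is invertible, since in block form it is the identity on both the base and fiber components. By the analytic inverse function theorem, $E$ restricts to an analytic diffeomorphism between some open neighborhood $W$ of $(x_0,0)$ in $\mathcal{E}$ and some open neighborhood $V$ of $(x_0,x_0)$ in $M'\times M'$.

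For the first part I choose $r>0$ small enough that (i) $\Delta_r(x_0)\subset V$, and (ii) the ball $B_{2r}(x_0)\subset M'$ is a totally normal neighborhood in the classical sense: any two of its points are joined by a unique length-minimizing geodesic contained in $B_{2r}(x_0)$, whose initial velocity lies in the fiber component of $W$. Then for any $(x,y)\in\Delta_r(x_0)$ the triangle inequality gives $y\in B_{2r}(x_0)$, and the minimizing geodesic from $x$ to $y$ has initial velocity $v$ satisfying $(x,v)=(E|_W)^{-1}(x,y)$. Consequently, on $\Delta_r(x_0)$,
\[
d(x,y)^2=|v|_g^2=g_{ij}(x)v^iv^j.
\]
Since the fiber norm squared $(x,v)\mapsto g_{ij}(x)v^iv^j$ is analytic on $TM'$ and $(E|_W)^{-1}$ is analytic on $V$, the squared distance function is analytic on $\Delta_r(x_0)$.

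For the compact version I apply the first part at each point $x\in K$ to obtain $r_x>0$ such that $d^2$ is analytic on $\Delta_{r_x}(x)$. Note that if $x'\in B_{r_x/2}(x)$ and $d(x',y)<r_x/2$, then $d(x',x)<r_x$ and $d(x',y)<r_x$, so $(x',y)\in\Delta_{r_x}(x)$. By compactness of $K$, cover it by finitely many such balls $B_{r_{x_i}/2}(x_i)$, $i=1,\dots,N$, and set $O=\bigcup_i B_{r_{x_i}/2}(x_i)$, $r=\min_i r_{x_i}/2$. Then $\Delta_{O,r}(K)\subset\bigcup_i\Delta_{r_{x_i}}(x_i)$, and since analyticity is a local property and holds on each $\Delta_{r_{x_i}}(x_i)$, it holds on $\Delta_{O,r}(K)$.

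The only delicate point is condition (ii) in the first part: ensuring that the local analytic inverse of $E$ returns the minimizing geodesic rather than some longer geodesic joining the same endpoints. This is the role of the totally normal neighborhood and amounts to the classical fact that every point of a Riemannian manifold admits such a neighborhood; once $r$ is chosen accordingly, the identification of $d^2$ with the analytic expression $g_{ij}(x)v^iv^j\circ(E|_W)^{-1}$ is automatic.
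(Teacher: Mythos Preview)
Your argument is correct and is the standard route: analyticity of the geodesic flow gives an analytic exponential map, the analytic inverse function theorem applied to $E(x,v)=(x,\exp_x v)$ produces a local analytic inverse near the diagonal, a totally normal neighborhood guarantees this inverse returns the minimizing segment so that $d^2(x,y)=|v|_g^2$, and a finite-cover argument handles the compact case. The paper does not give its own proof of this lemma but refers to \cite{V}; your proof is essentially what one expects to find there.
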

See \cite{V} for proof.

\begin{proof}
Since the sphere $|\theta|_g=1$ is connected, it is sufficient to
show that for fixed $x$, $\ti{\phi}(x,\theta)$ is locally constant
in $\theta$.

Fix $(x_0,\theta_0)$, and let $N\subset S_{x_0}M_1$ be a
neighborhood of $\theta_0$.  Let $(z_0,\xi_0)$ and $(y_0,\eta_0)$
correspond to $(x_0,\theta_0)$ following the notation above.  Then
we have
\begin{align*}
x_0=&\gamma_{z_0,\xi_0}(T) \\
\ti{\phi}(x_0,\theta_0)=&\gamma_{y_0,\eta_0}(T).
\end{align*}
For all $\theta\in N$, let
$(z,\xi)=(\gamma_{x_0,\theta}(-T),\gamma_{x_0,\theta}'(-T))$, so
that
\begin{align*}
x_0=\gamma_{z,\xi}(T).
\end{align*}
Let $(y,\eta)=\phi_{0*}(z,\xi)$. It can be shown that if $N$ is
sufficiently small,
\begin{align}\label{nearby}
\ti{\phi}(x_0,\theta)=\gamma_{y,\eta}(T).
\end{align}
Equation \eqref{nearby} seems to be a direct application of the
definition of $\ti{\phi}$ but in fact, between $x_0$ and $z$, the
curve $\gamma_{x_0,\theta}$ can pass across the boundary in and out
of $M_1$.  Therefore, $T$ is not necessarily a valid choice for
$T(x_0,\theta)$ according to the definition. Nevertheless, using the
fact that the two magnetic systems have the same scattering and
length data, equation \eqref{nearby} still holds.

According to lemma \ref{dsquared}, the functions
\begin{align*}
\rho_1(t)=&d_{g_1}^2(\gamma_{z_0,\xi_0}(t), \gamma_{z,\xi}(t)),
\quad \text{and} \\
\rho_2(t)=&d_{g_2}^2(\gamma_{y_0,\eta_0}(t),\gamma_{y,\eta}(t))
\end{align*}
will be analytic wherever their values are less than some fixed
constant $r$.

Since $\phi_0$ is a magnetic equivalence, it carries magnetic
geodesics into magnetic geodesics.  $(y,\eta)=\phi_{0*}(z,\xi)$.
Therefore, for $t$ near $0$, we have
\begin{align*}
\phi_0(\gamma_{z,\xi}(t))=\gamma_{y,\eta}(t).
\end{align*}
 A magnetic equivalence is, in particular, an
isometry of metrics, so we conclude that for $t$ near $0$,
$\rho_1(t)=\rho_2(t)$.  By analytic continuation, the two function
must be equal for all $t$ up to $T$. Therefore,
$\rho_2(T)=\rho_1(T)=0$. This implies that
\begin{align*}
\ti{\phi}(x_0,\theta)=\gamma_{y,\eta}(T)=\gamma_{y_0,\eta_0}(T)=\ti{\phi}(x_0,\theta_0).
\end{align*}
\end{proof}
We define $\phi:M_1'\rightarrow M_2'$ by
\begin{align*}
x\mapsto & \ti{\phi}(x,\theta),\quad &\text{for}\quad x\in M_1 \\
x\mapsto & \phi_0(x),\quad &\text{for}\quad x\in M_1'\setminus M_1.
\end{align*}
The magnetic geodesic flows on the respective sphere bundles of
$M_1$ and $M_2$ are analytic.  This implies that $\phi$ is an
analytic mapping. The fact that $\phi|_{V_1}=\phi_0$ follows
directly from the definition of $\phi$ and the fact that $\phi_0$ is
a magnetic equivalence.  By analytic continuation, $\phi$ must
satisfy
\begin{align*}
\phi^*g_2=g_1,\quad \phi^*\Omega_2=\Omega_1.
\end{align*}
\end{proof}


\begin{thebibliography}
\frenchspacing

\bibitem[AS]{AS} {\sc D.V. Anosov and Y.G. Sinai}, {Certain smooth ergodic systems},{\it Russian Math. Surveys}, {\bf 22:5} (1967), 103 -- 167

\bibitem[Ar1]{A61} {\sc V.I. Arnold}, { Some remarks on flows of line elements and frames}, {\it Soviet Math. Dokl.}, {\bf 2} (1961), 562 -- 564.

\bibitem[Ar2]{A88} {\sc V.I. Arnold}, { On some problems in symplectic topology}, {\it  Topology and Geometry   Rochlin Seminar}, O.Ya. Viro (Editor), {\it Lect. Notes in Math.}, vol. 1346, Springer, 1988.


\bibitem[BI]{BI} {\sc D. Burago and S. Ivanov}, Boundary rigidity and filling
volume minimality of metrics close to a flat one, preprint, 2005.

\bibitem[BCG]{BCG}{\sc G. Besson, G. Courtois, and S. Gallot},
Entropies et rigidit\'es des espaces localement sym\'etriques de
courbure strictment n\'egative, {\it Geom. Funct. Anal.}, {\bf
5}(1995), 731-799.

\bibitem[CMP]{CMP}{\sc G. Contreras, L. Macarini, and G.P. Paternain}. { Periodic
orbits for exact magnetic flows on surfaces}, {\it Int. Math. Res. Not.}, {\bf 8}
(2004), 361 -- 387.


\bibitem[C1]{C1} {\sc C. Croke}, Rigidity and the distance between boundary
points, {\it J. Differential Geom.}, {\bf 33}(1991), no. 2,
445--464.

\bibitem[C2]{C2} {\sc C. Croke}, Boundary and lens rigidity of finite quotients, {\it Proc. AMS}, {\bf 133}(2005), no. 12, 3663--3668.

\bibitem[CK]{CK} {\sc C. Croke and B. Kleiner}, Conjugacy and rigidity for manifolds with a parallel vector field, {\it J. Diff. Geom.} 39(1994), 659--680.

\bibitem[DPSU]{DPSU} {\sc N. Dairbekov, G. Paternain, P. Stefanov,
and G. Uhlmann} The Boundary Rigidity Problem in the Presence of a
Magnetic Field, {\it Advances in Mathematics} {\bf 216}, (2007),
iss. 2, 535--609

\bibitem[Gi]{Gi96} {\sc V.L. Ginzburg}, { On closed trajectories of a charge in a
magnetic field. An application of symplectic geometry}, {\it  Contact and
Symplectic Geometry} (Cambridge, 1994), C.B. Thomas (Editor), {\it Publ. Newton Inst.,
8, Cambridge University Press}, Cambridge, 1996, p. 131 -- 148.

\bibitem[Grog]{Gr99} {\sc S. Grognet}, { Flots magn\'etiques en courbure
n\'egative} (French), {\it Ergodic Theory \& Dynam. Systems} {\bf 19} (1999) no. 2,
413 -- 436.

\bibitem[Gr]{Gr} {\sc M. Gromov}, Filling Riemannian manifolds, {\it J. Diff. Geometry} {\bf 18}(1983), no. 1, 1--148.

\bibitem[He]{He} {\sc P. Herreros},  Scattering Boundary Rigidity in the Presence of a Magnetic Field, {\it  Preprint.}

\bibitem[Le]{Le} {\sc M. Levi}, { On a problem by Arnold on periodic motions in magnetic fields}, {\it Commun. Pure Appl. Math.} {\bf 56} (2003) No.8, 1165 -- 1177.

\bibitem[Mi]{Mi} {\sc R. Michel}, Sur la rigidit\'e impos\'ee par la
longueur des g\'eod\'esiques, {\it Invent. Math.} {\bf 65}(1981),
71--83.

\bibitem[Mu]{Mu} {\sc R. G. Mukhometov}, A problem of reconstructing a Riemannian metric (Russian), {\it Siberian Math. Journal}, {\bf 22} (1981), No. 3, 119-135.

\bibitem[NT]{NT} {\sc S.P. Novikov and I.A. Taimanov}, {Periodic extremals of many-valued or not everywhere positive functionals}, {\it Sov. Math. Dokl.}, {\bf 29}(1) (1984), 18 -- 20.

\bibitem[PP]{PP97} {\sc G.P. Paternain and M. Paternain}, { First derivative of
topological entropy for Anosov geodesic flows in the presence of magnetic
fields}, {\it Nonlinearity},{\bf 10} (1997), 121 -- 131.


\bibitem[PU]{PU} {\sc L. Pestov and G. Uhlmann}, Two dimensional simple
compact manifolds with boundary are boundary rigid,   {\it Ann.
Math.} {\bf 161}(2)(2005), 1089--1106.

\bibitem[Sch]{Sch} {\sc M. Schneider}, { Closed magnetic geodesics on S2},{\it Preprint}, arXiv:0808.4038 [math.DG], 2008.

\bibitem[SU1]{SU1}
{\sc P. Stefanov and G.Uhlmann}, Rigidity for metrics with the same
lengths of geodesics, {\it Math. Res. Lett.} {\bf 5}(1998), 83--96.

\bibitem[SU2]{SU2}
{\sc P. Stefanov and G.Uhlmann}, Local Lens Rigidity with Incomplete
Data for a class of non-Simple Riemannian Manifolds,
{\it J. of Diff. Geometry 82} (2009) 383-409.

\bibitem[V]{V}
{\sc J. Vargo}, A proof of lens rigidity in the category of analytic metrics, {\it Math. Research Letters} to appear.

\bibitem[WZ]{WZ} {\sc E. Wiechert and K. Zoeppritz}, Uber
Erdbebenwellen, {\it Nachr. Koenigl. Geselschaft Wiss, Goettingen}
{\bf 4}(1907), 415--549.

\end{thebibliography}
\end{document}